\numberwithin{equation}{section}
\newtheorem{thm}{Theorem}[section]
\newtheorem{remark}[thm]{Remark}
\begin{document}
\title[Inverse problem of determining the order  \dots]
{Inverse problem of determining the order of the fractional derivative in the Rayleigh-Stokes equation}

\author[Ravshan Ashurov, Oqila Mukhiddinova, \hfil \hfilneg] {Ravshan Ashurov, Oqila Mukhiddinova}  

\address{Ravshan Ashurov \newline
V.I. Romanovskiy Institute of Mathematics,\\
Uzbekistan Academy of Science, \\
University str.,9, Olmazor district,\\
Tashkent, 100174, Uzbekistan}
\email{ashurovr@gmail.com}

\address{Oqila Mukhiddinova \newline
Tashkent University of Information Technologies,\\
108 Amir Temur Avenue, \\
Tashkent, 100200, Uzbekistan}
\email{oqila1992@mail.ru}

\subjclass[2000]{} \keywords{The Rayleigh-Stokes problem, dependence of solution on the order of derivetive, inverse problem,
	determination of order of derivative, Fourier method.}
\begin{abstract}In recent years, much attention has been paid to the study of forward and inverse problems for the Rayleigh-Stokes equation in connection with the importance of this equation for applications. This equation plays an important role, in particular, in the study of the behavior of certain non-Newtonian fluids. The equation includes a fractional derivative of order $\alpha$, which is used to describe the viscoelastic behavior of the flow. In this paper, we study the behavior of the solution of such equations depending on the parameter $\alpha$. In particular, it is proved that for sufficiently large $t$ the norm $||u(x,t)||_{L_2(\Omega)}$ of the solution decreases with respect to $\alpha$. Moreover the inverse problem of determining the order of the derivative $\alpha$ is solved uniquely.

\end{abstract}

\maketitle \numberwithin{equation}{section}
\newtheorem{theorem}{Theorem}[section]
\newtheorem{corollary}[theorem]{Corollary}
\newtheorem{lemma}[theorem]{Lemma}

\newtheorem{problem}[theorem]{Problem}
\newtheorem{example}[theorem]{Example}
\newtheorem{definition}[theorem]{Definition}
\allowdisplaybreaks

\section{Introduction}

A fractional model of a generalized fluid flow of the second kind can be represented as a Rayleigh-Stokes problem with a fractional time derivative (see, for example, \cite{Bazh}):
\begin{equation}\label{probIN}
	\left\{
	\begin{aligned}
		&\partial_t u(x,t)  -(1+\gamma\, \partial_t^\alpha)\Delta u(x,t) = f(x, t),\quad x\in \Omega, \quad 0< t \leq T;\\
		&u(x, t) = 0, \quad x\in \partial \Omega, \quad 0 < t \leq T;\\
		&u(x, 0)= \varphi(x), \quad x\in \Omega,
	\end{aligned}
	\right.
\end{equation}
where $1/\gamma>0$ is the fluid density, a fixed constant, $f(x,t)$ is the source term  and $\varphi(x)$ is the initial data, $\partial_t= \partial/\partial t$, and $\partial_t^\alpha$ is the Riemann-Liouville fractional derivative of order $\alpha \in (0,1)$ defined by (see, e.g. \cite{KilSriTru}):
\begin{equation}\label{RL}
	\partial_t^\alpha h(t)= \frac{d}{dt} \int\limits_0^t \omega_{1-\alpha}(t-s) h(s)ds, \quad \omega_{\alpha}(t)=\frac{t^{\alpha-1}}{\Gamma(\alpha)}.
\end{equation}
Here $\Gamma(\sigma)$ is
Euler's gamma function. Based on physical considerations, usually the authors consider this problem in the domain $\Omega\subset \mathbb{R}^N$, $N=1,2,3$, and for $N>1$ it is assumed that the boundary $\partial\Omega$ of the domain $\Omega$ is sufficient smooth (see e.g. \cite{Bazh}, \cite{Duc}, \cite{Luc1}).

This equation for $\alpha=1$ is called the Haller equation and is a mathematical model of water movement in capillary-porous media, which include soils (see, for example, in \cite{Chud}, formulas (1.4) and (1.84) on p. 137 and 158, \cite{Nakh1}, formula (9.6.4) on p. 255, and \cite{Nakh2}, formula (2.6.1) on p. 59). Various initial-boundary value problems in this case were studied, for example, in the books \cite{Nakh1} and \cite{Nakh2} (see also the literature therein).

In recent years, the Rayleigh-Stokes problem (\ref{probIN}) has received much attention due to its importance for applications (see, for example, \cite{Tan1} - \cite{AshurovVaisova}). An overview of work in this direction can be found in Bazhlekova et al. \cite{Bazh} (see also \cite{AshurovVaisova}). Here we only note the main directions in which research was carried out.

1) In order to get an idea of the behavior of the solution of this model, an exact solution has been obtained in some special cases; see e.g. \cite{Fet}, \cite{Shen}, \cite{Zhao};

2) The Sobolev regularity of the homogeneous problem was studied in the fundamental work of Bazhlekov et al. \cite{Bazh} (for the case $f(x,t)\neq 0$ see \cite{AshurovVaisova});
\

3) A number of specialists have developed efficient and optimally accurate numerical algorithms for solving the problem (\ref{probIN}). A review of some works in this direction is contained in the above-mentioned paper \cite{Bazh}. See also recent papers \cite{Le}, \cite{Dai} and references therein;

4) Many works are devoted to the study of the inverse problem of determining the right-hand side of the Rayleigh-Stokes equation (see, for example, \cite{Duc},
\cite{Tran1}, \cite{Tran2},  and the bibliography cited there). Since this inverse problem is Hadamard ill-posed, various regularization methods and numerical methods for finding the right-hand side of the equation are proposed in these works;

5) If the initial condition $u(x, 0)= \varphi(x)$ in the problem (\ref{probIN})  is replaced by $u(x, T)= \varphi(x)$, then the resulting problem is called \emph{backward problem}. The backward problem for the Rayleigh-Stokes equation is of great importance and is aimed at determining the previous state of the physical field (for
for example, at t = 0) based on his current information (see, for example, \cite{Luc1}, \cite{Luc2} for the case $N\leq 3$ and \cite{AshurovVaisova} for an arbitrary $N$). However, this problem (as well as the inverse problem of finding the right-hand side of the equation) is ill-posed according to Hadamard. Therefore, the authors of \cite{Luc1}, \cite{Luc2} proposed various regularization methods and tested these methods using numerical experiments.
In the paper \cite{AshurovVaisova}, along with other questions, problem (\ref{probIN})  is investigated by taking the non-local condition $u(x, T)= u(x,0) + \varphi(x)$ instead of the initial condition. The authors proved that the non-local problem is well-posed in the sense of Hadamard: i.e. the unique solution exists, and the solution continuously depends on the initial data and on the right-hand side of the equation.

It is well known that the Rayleigh-Stokes problem (\ref{probIN}) plays an important role in the study of the behavior of some non-Newtonian fluids. The fractional derivative $\partial_t^\alpha$ is used in the equation (\ref{probIN}) to describe the behavior of viscoelastic flow (see, for example, \cite{Fet}, \cite{Shen}). Obviously, knowing the exact value of this parameter is interesting not only theoretically; it is necessary for more adequate modeling of the physical process. But this parameter is often unknown and difficult to measure directly. Therefore, it is undoubtedly relevant to study the inverse problem to determine this
a physical quantity from some indirectly observed information about the solutions. Such an inverse
the problem would be not only of theoretical interest, it would help to more accurately determine the solutions of the initial-boundary value problem and study the properties of the solutions.

The present work is devoted to the study of this new inverse problem for the Rayleigh-Stokes equation. To solve the inverse problem, it is necessary to set an additional condition. A natural requirement for such a condition is that it must ensure both the existence and uniqueness of the unknown parameter. In this paper, along with other results, it is proved that the additional condition $||u(x, t_0)||^2_{L_2(\Omega)}=d_0$ for sufficiently large $t_0$ just satisfies these requirements.

Another very important problem in general for any fractional order equation is the study of the dependence of the behavior of the solution of the initial-boundary value problem on the order of the fractional derivative. To the best of our knowledge, this problem has not yet attracted sufficient attention from researchers. In this work, when solving the inverse problem, an interesting fact was discovered: if, for example, the solution norm $||u(x, t_0)||_{L_2(\Omega)}$  is considered as a function of the parameter $\alpha$, then it is a decreasing function. In other words, the norm acquires its maximum value when the order of the fractional derivative is close to zero, and its minimum value - when this parameter is close to one.

The work consists of 7 Sections and Conclusion. In the next section, we present the exact formulation of the problem under study and formulate the main results of the work. The proofs of the main results are based on an estimate for the derivative of the function $B_\alpha(\lambda, t)$, which is defined in Section 3. In the same section, the derivative $\partial_\alpha B_\alpha(\lambda, t)$ is calculated and the Main Lemma about the sign of this derivative is formulated. Section 4 contains all auxiliary lemmas that are needed in what follows. The proof of the Main Lemma is contained in Section 5. Section 6 contains proofs of the main theorems of the paper. The paper considers the abstract Rayleigh-Stokes equation with a self-adjoint operator $A$. Section 7 gives examples of the operator $A$ for which the main results of the paper are valid. In the same section, various variants of the additional condition are formulated.
The paper ends with the section Conclusion.
\section{Problem statements and results}

Let $\Omega\subset \mathbb{R}^N$ be an arbitrary bounded domain with a piecewise smooth boundary $\partial\Omega$ and  $A$ be a positive self-adjoint extension in $L_2(\Omega)$ of the Laplace operator generated with the homogeneous Dirichlet boundary condition. Then $D(A)=W_2^2(\Omega) \cap \dot{W}_2^1(\Omega)$, where $\dot{W}_2^1(\Omega)$ is the closure in the norm of the Sobolev space ${W}_2^1(\Omega)$ of the set of functions from $C^2(\Omega)$ that vanish on the boundary of the domain $\Omega$ (see e.g. \cite{Ber}).

If we denote by $\{v_k(x)\}$ the complete orthonormal in $L_2(\Omega)$ system of eigenfunctions and by $\{\lambda_k\}$ the set of positive eigenvalues  $0<\lambda_1 <\lambda_2\leq \lambda_3\leq \cdot\cdot\cdot\rightarrow +\infty$, of the spectral problem
\begin{equation}\label{spectral}
	\left\{
	\begin{aligned}
		&-\Delta v(x) = \lambda v(x),\quad x\in \Omega;\\
		&v(x)=0, \quad x\in \partial \Omega,
	\end{aligned}
	\right.
\end{equation}
then the action of the operator $A$ can be written as:
\[
A f(x)= \sum\limits_{k=1}^\infty \lambda_k f_k v_k(x),\quad f\in D(A).
\]
Here and below, $f_k$ denotes the Fourier coefficients of $f\in L_2(\Omega)$ with respect to the system of eigenfunctions $\{v_k(x)\}$: $f_k=(f,v_k)$, $(\cdot, \cdot)$ is the inner product in $L_2(\Omega)$.

Our study is based on the classical method of separation of variables. Consequently, from the Laplace operator in the Rayleigh-Stokes problem (\ref{probIN}), we need only the discreteness of the spectrum and the completeness of the systems of eigenfunctions in $L_2(\Omega)$. Therefore, wishing to cover more general elliptic operators, we formulate the Rayleigh-Stokes problem in an abstract form.

Let $H$ be a separable Hilbert space with the scalar product $(\cdot, \cdot)$ and the norm $||\cdot||$. Consider an arbitrary unbounded positive self-adjoint operator $A$ in $H$ with the domain of definition $D(A)$ and having a compact inverse. Let us define a complete system of orthonormal
eigenfunctions by $\{v_k\}$ and a countable set of positive
eigenvalues by $\lambda_k:$ $0<\lambda_1\leq\lambda_2 \cdot\cdot\cdot\rightarrow +\infty$.

For a vector-valued functions (or simply functions)
$h: \mathbb{R}_+\rightarrow H$, the Riemann-Liouville fractional derivative of order $0<\alpha< 1$ is defined in the same way as (\ref{RL}) (see, e.g. \cite{Liz}).
Finally, let $C((a,b); H)$ stand
for a set of continuous in $t\in (a,b)$ functions $u(t)$  with
values in $H$.

Consider the Rayleigh-Stokes problem in an abstract form
\begin{equation}\label{probAbstract}
	\left\{
	\begin{aligned}
		&\partial_t u(t)  + (1+\gamma\, \partial_t^\alpha)A u(t) = 0,\quad 0< t \leq T;\\
		&u(0)= \varphi,
	\end{aligned}
	\right.
\end{equation}
where $\gamma>0$, $\alpha\in (0,1)$ and $\varphi\in H$.

Assume that the order $\alpha$ of the fractional derivative in this problem is unknown. Consider the inverse problem of determining this parameter. The homogeneity of the equation greatly facilitates the application of the method proposed in this paper.

To find $\alpha$, it is necessary to set an additional condition. The natural requirement for this condition is that it must guarantee both the existence and the uniqueness of the unknown parameter. Before formulating this condition, we note that the solution to problem (\ref{probAbstract}) naturally depends on $\alpha$, although we denote it by $u(t)$.

The additional condition we propose has the form
\begin{equation}\label{ad_con}
	U(t_0, \alpha)\equiv	||u(t_0)||^2= d_0,\quad t_0\geq T_0,
\end{equation}
where $T_0$ is defined later. Problem (\ref{probAbstract}) together with the additional condition (\ref{ad_con}) will be called \textit{the inverse problem}.

This inverse problem can be
interpreted as follows: can we uniquely identify the unknown order of the fractional derivative $\alpha$ if, as additional information, we have the mean square value of the solution to the Rayleigh-Stokes problem (\ref{ad_con}) at a fixed time instance $t_0$?

\begin{definition}\label{def} A pair $\{u(t), \alpha\}$ of the function $u(t)$ and the parameter  $\alpha$ with the properties
	\begin{enumerate}
		\item
		$\alpha \in (0, 1)$,
		\item$u(t)\in
		C([0,T]; H)$,
		\item
		$\partial_t u(t), \,  A u(t), \, \partial_t^\alpha Au(t)\in C((0,T); H)$
	\end{enumerate}
	and satisfying all the conditions of
	problem (\ref{probAbstract}) - (\ref{ad_con}) is called
	\textbf{the solution} of inverse problem.
\end{definition}

Before stating the main result of the work on the solution of the inverse problem, we formulate an auxiliary assertion, which is also of independent interest.

\begin{theorem}\label{norm}
	There exists a number $T_0=T_0(\gamma, \alpha, \lambda_1)\geq 1$ such that for every $t_0\geq T_0$ and every $\varphi \in H$ the function $U(t_0, \alpha )$ strictly decreasing in $\alpha\in (0,1)$.	\end{theorem}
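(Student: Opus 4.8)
The plan is to reduce the assertion about $U(t_0,\alpha)=\|u(t_0)\|^2$ to the sign of the $\alpha$-derivative of the scalar multipliers $B_\alpha(\lambda,t)$ introduced in Section 3, and then to invoke the Main Lemma. By separation of variables the unique solution of problem (\ref{probAbstract}) is
\[
u(t)=\sum_{k=1}^\infty \varphi_k\, B_\alpha(\lambda_k,t)\, v_k,
\]
where, for each fixed $\lambda>0$, the function $t\mapsto B_\alpha(\lambda,t)$ solves the scalar Rayleigh--Stokes problem $b'(t)+\lambda b(t)+\gamma\lambda\,\partial_t^\alpha b(t)=0$, $b(0)=1$. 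Hence, by Parseval's identity,
\[
U(t_0,\alpha)=\sum_{k=1}^\infty |\varphi_k|^2\, B_\alpha(\lambda_k,t_0)^2 .
\]
So it suffices to prove that for every fixed $\lambda\ge\lambda_1$ the map $\alpha\mapsto B_\alpha(\lambda,t_0)^2$ is strictly decreasing on $(0,1)$ once $t_0$ is large enough, and that $\partial_\alpha$ may be carried under the summation sign.

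First I would record the positivity $B_\alpha(\lambda,t)>0$ for all $t\ge 0$, $\lambda>0$, which follows from the integral representation of $B_\alpha$ (complete monotonicity, as in the work of Bazhlekova et al.) derived in Section 3. Consequently
\[
\partial_\alpha\!\left(B_\alpha(\lambda,t_0)^2\right)=2\,B_\alpha(\lambda,t_0)\,\partial_\alpha B_\alpha(\lambda,t_0)
\]
has the same sign as $\partial_\alpha B_\alpha(\lambda,t_0)$. By the Main Lemma of Section 3, $\partial_\alpha B_\alpha(\lambda,t)<0$ once $t$ exceeds a threshold; I would then verify that this threshold can be taken uniformly over the whole spectrum $\lambda\ge\lambda_1$ --- the natural expectation, matching the stated dependence $T_0=T_0(\gamma,\alpha,\lambda_1)$, being that the last time at which $\partial_\alpha B_\alpha(\lambda,\cdot)$ becomes negative is attained at the smallest eigenvalue $\lambda=\lambda_1$; alternatively this follows directly from the explicit two-sided estimates of $B_\alpha$ and $\partial_\alpha B_\alpha$ collected in Section 4. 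With such a $T_0\ge 1$ fixed and $t_0\ge T_0$, every term of the differentiated series is $\le 0$, and (for $\varphi\ne 0$) the term corresponding to the first index $k$ with $\varphi_k\ne 0$ is strictly negative, so $\partial_\alpha U(t_0,\alpha)<0$.

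It remains to justify termwise differentiation. Using the bounds of Section 4 one obtains, for $t_0\ge T_0$ and $\alpha$ in a compact subinterval of $(0,1)$, an estimate $\big|B_\alpha(\lambda_k,t_0)\,\partial_\alpha B_\alpha(\lambda_k,t_0)\big|\le C$ uniformly in $k$ (in fact the decay of $B_\alpha(\lambda_k,t_0)$ in $\lambda_k$ lets one dominate the series of $\alpha$-derivatives by a constant multiple of $\sum_k|\varphi_k|^2=\|\varphi\|^2<\infty$), and this domination is locally uniform in $\alpha$. Differentiation under the sum is then legitimate on all of $(0,1)$, and combining the three steps gives that $U(t_0,\cdot)$ is strictly decreasing on $(0,1)$ for every $t_0\ge T_0$.

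The principal difficulties are the two uniformity requirements: extracting a single $T_0$ valid for all $\lambda\ge\lambda_1$ rather than a $\lambda_k$-dependent one, and controlling $\partial_\alpha B_\alpha(\lambda_k,t_0)$ uniformly in $k$ and locally uniformly in $\alpha$ so that the interchange of $\partial_\alpha$ and $\sum_k$ is valid. Both rest on having sufficiently sharp estimates for $B_\alpha$ and $\partial_\alpha B_\alpha$ --- exactly the content of the Main Lemma and the auxiliary lemmas of Sections 3 and 4.
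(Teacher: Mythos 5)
Your argument is correct and follows essentially the same route as the paper: separation of variables plus Parseval reduces $U(t_0,\alpha)$ to $\sum_k|\varphi_k|^2B_\alpha^2(\lambda_k,t_0)$, and positivity of $B_\alpha$ (Lemma 3.1) together with the Main Lemma --- whose threshold $T_0=T_0(\gamma,\alpha,\lambda_1)$ is already stated uniformly in $\lambda\ge\lambda_1$, so no separate verification is needed --- yields the strict decrease. The only deviation is that you differentiate the series and justify the interchange of $\partial_\alpha$ and $\sum_k$; the paper sidesteps this entirely by noting that each term $|\varphi_k|^2B_\alpha^2(\lambda_k,t_0)$ is itself nonincreasing in $\alpha$ (strictly decreasing when $\varphi_k\ne0$), so monotonicity of the sum follows at once.
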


This theorem establishes the dependence of the norm of the solution of the Rayleigh-Stokes problem on the order of the fractional derivative.  Namely, the norm acquires its maximum value when the order of the fractional derivative is close to zero, and its minimum value - when this parameter is close to one. There are only a few works where such a dependence of the solution of initial-boundary value problems for subdiffusion equations has been studied (see, for example, \cite{AlimovAshurov1} and \cite{AlimovAshurov2}).

Here is the main resul of the paper.

\begin{theorem}\label{main}
	Let $\varphi \in H$ and $t_0\geq T_0$. Then for the inverse problem to have a solution $\{u(t), \alpha\}$ it is necessary and sufficient that the condition
	\begin{equation}\label{U_con}
		\inf_{\alpha\in (0,1)}U(t_0, \alpha)\leq d_0\leq 	\sup_{\alpha\in (0,1)}U(t_0, \alpha)
	\end{equation}
	be satisfied.
\end{theorem}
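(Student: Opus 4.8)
The plan is to derive Theorem~\ref{main} from the strict monotonicity of $U(t_0,\cdot)$ supplied by Theorem~\ref{norm}, together with a continuity statement for the same function and the intermediate value theorem. \emph{Necessity} is essentially a tautology: if $\{u(t),\alpha\}$ solves the inverse problem then $\alpha\in(0,1)$ and, by (\ref{ad_con}), $d_0=U(t_0,\alpha)$; since any value $U(t_0,\alpha)$ with $\alpha\in(0,1)$ lies between $\inf_{\alpha\in(0,1)}U(t_0,\alpha)$ and $\sup_{\alpha\in(0,1)}U(t_0,\alpha)$, condition (\ref{U_con}) holds.

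For \emph{sufficiency} I would first make $U(t_0,\cdot)$ explicit. Solving the forward problem (\ref{probAbstract}) by separation of variables gives $u(t)=\sum_{k}\varphi_k\,B_\alpha(\lambda_k,t)\,v_k$ with $\varphi_k=(\varphi,v_k)$ and $B_\alpha$ the scalar kernel introduced in Section~3, so that by Parseval's identity (schematically)
\[
U(t_0,\alpha)=\sum_{k=1}^{\infty}\varphi_k^{2}\,B_\alpha(\lambda_k,t_0)^{2}.
\]
Since $0\le B_\alpha(\lambda,t)\le 1$ for $\lambda>0$, $t\ge 0$ (the kernel starts at $1$ and is non-increasing), each term is dominated by $\varphi_k^{2}$, so the series converges uniformly in $\alpha\in(0,1)$ with majorant $\|\varphi\|^{2}$. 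Each summand $\alpha\mapsto B_\alpha(\lambda_k,t_0)^{2}$ is continuous — indeed $C^{1}$, by the integral representation of $B_\alpha$ and the formula for $\partial_\alpha B_\alpha$ computed in Section~3 — hence $\alpha\mapsto U(t_0,\alpha)$ is continuous on $(0,1)$.

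By Theorem~\ref{norm} the function $U(t_0,\cdot)$ is, in addition, strictly decreasing on $(0,1)$, so its range is the open interval $\bigl(\inf_{\alpha}U(t_0,\alpha),\ \sup_{\alpha}U(t_0,\alpha)\bigr)$, with the endpoints realized as the one-sided limits at $\alpha\to 1^{-}$ and $\alpha\to 0^{+}$. Consequently, whenever (\ref{U_con}) holds with strict inequalities there is a (unique) $\alpha\in(0,1)$ with $U(t_0,\alpha)=d_0$; pairing it with the corresponding forward solution $u(t)$ — which satisfies the three regularity requirements of Definition~\ref{def} by the standard estimates for (\ref{probAbstract}) recalled earlier — produces a solution $\{u(t),\alpha\}$ of the inverse problem.

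The genuinely hard input is not in this argument but in Theorem~\ref{norm}, on which the whole scheme rests; within the present proof the only delicate points are the uniform convergence of the defining series (so that continuity and the identification of $\inf$ and $\sup$ with the boundary limits may be passed through the infinite sum) and the verification that the $u(t)$ attached to the found $\alpha$ meets all of Definition~\ref{def}. I would also remark that, strictly speaking, the range of a continuous strictly monotone function on an open interval is open, so a solution with $\alpha\in(0,1)$ exists precisely under the strict form of (\ref{U_con}); the equality cases $d_0=\inf_{\alpha}U(t_0,\alpha)$ and $d_0=\sup_{\alpha}U(t_0,\alpha)$ correspond formally to the limiting orders $\alpha=1$ and $\alpha=0$ and are incorporated into (\ref{U_con}) as boundary values.
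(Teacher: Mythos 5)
Your proposal is correct and follows essentially the same route as the paper: expand $U(t_0,\alpha)=\sum_k|\varphi_k|^2B_\alpha^2(\lambda_k,t_0)$, invoke the strict monotonicity in $\alpha$ from Theorem~\ref{norm} (i.e.\ the Main Lemma), and conclude existence of the unique $\alpha$ together with the forward solution. The only differences are refinements: you make explicit the continuity of $U(t_0,\cdot)$ (uniform convergence via the bound $0<B_\alpha\le 1$) needed for the intermediate value argument, and you correctly flag that for a continuous strictly decreasing function on the open interval $(0,1)$ the infimum and supremum in (\ref{U_con}) are not attained, a boundary subtlety the paper's proof passes over silently.
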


\begin{remark}
	Theorem defines the unique $ \alpha $ from (\ref {ad_con}). Hence, if we
	define the norm $U(t_0, \alpha)$ at another time instant $ t_1\geq T_0 $
	and get a new $\alpha_1$, i.e. $U(t_1, \alpha_1)=d_1$, then from the
	equality $U(t_0, \alpha_1)=d_0$, by virtue of the theorem, we obtain
	$\alpha_1 = \alpha$.
\end{remark}
The inverse problem of determining the order of the fractional derivative for the subdiffusion equations, the fractional-wave equation, and mixed-type equations has been studied by many authors. An overview of works published up to 2019 can be found in \cite{LiLiu}. After this paper, a lot of works have been devoted to this subject, and a brief review of these works can be found in works \cite{AshurovMatzametki} and \cite{AshurovSitnik}.

Results close to Theorem \ref{main} were obtained in work \cite{AshurovUmarov1} for the subdiffusion equation by studying the behavior of the derivative of the Mittag-Leffler function $E_{\alpha, 1}$ with respect to the parameter $\alpha$. Works \cite{AshurovUmarov2} and \cite{AshurovUmarov3} are devoted to the study of the inverse problem of determining the vector order of the fractional derivative for systems of pseudodifferential equations.

\section{Derivative of function $B_\alpha(\lambda, t)$ with respect to parameter $\alpha$}
When solving the Rayleigh-Stokes problem using the method of separation of variables, we come to the solution of an ordinary differential equation (see e.g. \cite{Bazh} and \cite{AshurovVaisova}))
\begin{equation}\label{B}
	L y(t)\equiv	y'(t)+\lambda (1+\gamma \partial_t^\alpha)y(t)=0, \,\, t>0,\,\,\lambda>0.
\end{equation}
It is well known that the solution to such an equation can be expressed in terms of the generalized Wright function (see, for example, A.A. Kilbas et. al \cite{KilSriTru}, Example 5.3, p. 289). We also note the work of Pskhu \cite{Pskhu}, where more general equations than (\ref{B}) were studied. From the results of this article, one can obtain a representation of the solution of the Cauchy problem for the equation (\ref{B}), which is very convenient for further research.

But for our reasoning it is convenient to use the fundamental result of work  Bazhlekova, Jin, Lazarov, and Zhou \cite{Bazh} on the solution of equation (\ref{B}). Before formulating the corresponding results, we note that since the equation is homogeneous and linear, to study the property of the solution of the Cauchy problem for equation (\ref{B}), it suffices to consider the Cauchy condition of the form $y(0)=1$.

The authors of \cite{Bazh}, in particular, proved the following lemma.
\begin{lemma}\label{Bazh} Let $B_\alpha(\lambda, t)$ be a solution of the Cauchy problem for equation (\ref{B}) with an initial condition $y(0)=1$. Then
	\begin{enumerate}
		\item
		$B_\alpha (\lambda, 0)=1,\,\, 0<B_\alpha (\lambda, t)<1,\,\, t>0$,
		\item$\partial_t B_\alpha (\lambda, t)<0, \,\, t\geq 0$,
		\item$\lambda B_\alpha (\lambda, t)< C \min \{t^{-1}, t^{\alpha-1}\}, \,\,t>0$,
		\item
		$\int\limits_0^T B_\alpha (\lambda, t) dt \leq \frac{1}{\lambda}, \,\, T>0.$
	\end{enumerate}
	
\end{lemma}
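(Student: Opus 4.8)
The plan is to pass to the Laplace transform, read off the explicit resolvent of $B_\alpha$, and deduce all four items from it; the decay bound~(3) is where the real work lies. First I would Laplace-transform~(\ref{B}) with $y(0)=1$. Since the sought solution is bounded near $t=0$, the Riemann--Liouville boundary term $(I^{1-\alpha}y)(0^{+})$, with $I^{1-\alpha}h(t):=\int_0^t\omega_{1-\alpha}(t-s)h(s)\,ds$, vanishes; hence $\mathcal L[\partial_t^\alpha y](z)=z^\alpha\widehat y(z)$ and
\[
\widehat{B_\alpha}(\lambda,z)=\frac{1}{\,g_\alpha(\lambda,z)\,},\qquad g_\alpha(\lambda,z):=z+\lambda+\lambda\gamma z^\alpha .
\]
The function $g_\alpha(\lambda,\cdot)$ is a complete Bernstein function (a sum of $z$, of $\lambda\gamma z^\alpha$ with $0<\alpha<1$, and of the positive constant $\lambda$); in particular it has no zeros on $\mathbb C\setminus(-\infty,0]$, which can also be seen directly, since $\mathrm{Re}\,g_\alpha>0$ in the right half-plane, while for $\mathrm{Im}\,z\neq0$ the terms $z$ and $\lambda\gamma z^\alpha$ have imaginary parts of the same sign (as $\alpha<1$). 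Consequently $1/g_\alpha$ is a Stieltjes function, and collapsing the Bromwich inversion contour onto the cut $(-\infty,0]$ (Jordan's lemma applies since $\widehat{B_\alpha}\to0$ at infinity) gives
\[
B_\alpha(\lambda,t)=\int_0^\infty e^{-rt}\,K_\alpha(\lambda,r)\,dr,
\]
where
\[
K_\alpha(\lambda,r)=\frac{\lambda\gamma\sin(\pi\alpha)\,r^{\alpha}}{\pi\bigl[(r-\lambda-\lambda\gamma r^{\alpha}\cos\pi\alpha)^{2}+(\lambda\gamma r^{\alpha}\sin\pi\alpha)^{2}\bigr]}>0\quad(r>0),
\]
the positivity being due to $\sin\pi\alpha>0$.

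Items~(1), (2) and~(4) then follow quickly. The representation gives $B_\alpha(\lambda,t)>0$, and $\partial_t B_\alpha(\lambda,t)=-\int_0^\infty r\,e^{-rt}K_\alpha(\lambda,r)\,dr<0$ for $t>0$, so $B_\alpha(\lambda,\cdot)$ is strictly decreasing and $B_\alpha(\lambda,t)<B_\alpha(\lambda,0^{+})$ for $t>0$. Monotonicity guarantees that this one-sided limit exists, and the initial-value theorem identifies it: $B_\alpha(\lambda,0^{+})=\lim_{z\to+\infty}z\,\widehat{B_\alpha}(\lambda,z)=1$ (again using $\alpha<1$); this proves~(1) and~(2). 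For~(4) I would instead integrate~(\ref{B}) with $y=B_\alpha$ over $(0,T)$, using $(I^{1-\alpha}B_\alpha)(0^{+})=0$:
\[
B_\alpha(\lambda,T)-1+\lambda\int_0^T B_\alpha(\lambda,t)\,dt+\lambda\gamma\int_0^T\omega_{1-\alpha}(T-s)\,B_\alpha(\lambda,s)\,ds=0 .
\]
Since $B_\alpha\ge0$, both $B_\alpha(\lambda,T)\ge0$ and the last integral is $\ge0$, whence $\lambda\int_0^T B_\alpha(\lambda,t)\,dt\le1$.

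The substantive part is~(3), whose crucial ingredient is a resolvent bound \emph{uniform in} $\lambda$: fixing $\theta\in(\pi/2,\pi)$,
\[
\bigl|\,z+\lambda+\lambda\gamma z^\alpha\,\bigr|\ \ge\ c(\alpha,\gamma,\theta)\,\bigl(|z|+\lambda+\lambda|z|^\alpha\bigr)\qquad\text{whenever }|\arg z|\le\theta .
\]
I would prove this by a short case analysis on $\arg z$: the three summands have arguments lying in one common sector of opening $<\pi$ (because $\alpha<1$), so no cancellation can occur; one bounds $\mathrm{Re}\,g_\alpha$ from below when $|\arg z|\le\pi/2$, bounds $\mathrm{Im}\,g_\alpha$ from below when $\pi/2<|\arg z|\le\theta$, and separately notes that $|g_\alpha|$ exceeds a fixed multiple of $\lambda$ when $|z|$ is small relative to $\lambda$. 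Making all the constants in this step (and in the contour estimate below) independent of $\lambda$ is the only point in the whole argument that really requires care.

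With the resolvent bound in hand I would deform the inversion contour onto the Hankel contour $\Gamma_t$ formed by the rays $\{\arg z=\pm\theta,\ |z|\ge 1/t\}$ and the arc $\{|z|=1/t\}$; a one-line computation gives $\int_{\Gamma_t}|e^{zt}|\,|dz|\le C(\theta)\,t^{-1}$. Using $|\widehat{B_\alpha}(\lambda,z)|\le(c\lambda)^{-1}$ on $\Gamma_t$ then yields $\lambda B_\alpha(\lambda,t)\le C\,t^{-1}$, while using instead $|\widehat{B_\alpha}(\lambda,z)|\le(c\lambda|z|^{\alpha})^{-1}$, together with $\int_{1/t}^\infty e^{-|\cos\theta|\,rt}r^{-\alpha}\,dr\le\Gamma(1-\alpha)\,(|\cos\theta|\,t)^{\alpha-1}$ on the rays and $(1/t)^{1-\alpha}=t^{\alpha-1}$ on the arc, yields $\lambda B_\alpha(\lambda,t)\le C\,t^{\alpha-1}$; the two together give $\lambda B_\alpha(\lambda,t)\le C\min\{t^{-1},t^{\alpha-1}\}$. (The $t^{\alpha-1}$ estimate can alternatively be read straight off the spectral representation, since the bracket in the denominator of $K_\alpha$ is $\ge(\lambda\gamma\sin\pi\alpha)^{2}r^{2\alpha}$, so that $\lambda K_\alpha(\lambda,r)\le(\pi\gamma\sin\pi\alpha)^{-1}r^{-\alpha}$ and hence $\lambda B_\alpha(\lambda,t)\le(\pi\gamma\sin\pi\alpha)^{-1}\Gamma(1-\alpha)\,t^{\alpha-1}$.)
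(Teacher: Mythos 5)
Your argument is correct, and it is essentially the standard proof: the paper itself does not prove Lemma \ref{Bazh} but quotes it from Bazhlekova--Jin--Lazarov--Zhou \cite{Bazh}, where exactly your route is taken --- Laplace transform giving $\widehat{B_\alpha}=1/(z+\lambda+\lambda\gamma z^\alpha)$, a sectorial lower bound on this symbol, deformation to a Hankel contour for the $\min\{t^{-1},t^{\alpha-1}\}$ decay, and collapse of the contour onto the cut yielding precisely the positive-kernel representation (\ref{BInt}) that the present paper then uses. The only cosmetic remark is that your monotonicity argument covers $t>0$ (at $t=0$ the derivative is $-\infty$, still consistent with item (2)), and your sector estimate can be obtained in one line by rotating, since the arguments of $z$, $\lambda$, $\lambda\gamma z^\alpha$ all lie in an interval of length at most $\theta<\pi$.
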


The following representation of function $B_\alpha (\lambda, t)$, also obtained in \cite{Bazh}:
\begin{equation}\label{BInt}
	B_\alpha (\lambda, t)=\int\limits_0^\infty e^{-rt} b_\alpha(\lambda, r) dr,
\end{equation}
where
\[
b_\alpha(\lambda, r)=\frac{\gamma}{\pi} \frac{\lambda r^\alpha \sin \alpha \pi}{(-r+\lambda\gamma r^\alpha \cos \alpha \pi +\lambda)^2+(\lambda \gamma r^\alpha \sin \alpha \pi )^2}.
\]
This implies, in particular, that the function $B_\alpha (\lambda, t)$ does not give a solution to the Cauchy problem for $\alpha=0$, $\alpha=1$, $\gamma=0$ and $\lambda =0$.

Let us present one more result concerning the solution of the general Cauchy problem, established in the same paper \cite{Bazh} (see also \cite{AshurovVaisova}).
\begin{lemma}\label{BazhEquation} The Cauchy problem
	\begin{equation}\label{BCauchyIN}
		y'(t)+\lambda (1+\gamma \partial_t^\alpha)y(t)=0, \,\, t>0,\,\,\lambda>0,\,\, y(0)=y_0,
	\end{equation}
	has the only solution
	\begin{equation}\label{BCauchySolutionIN}
		y(t)=y_0 B_\alpha(\lambda, t).	
	\end{equation}
\end{lemma}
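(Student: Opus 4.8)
The plan is to separate the assertion into existence and uniqueness. Existence is an immediate consequence of Lemma~\ref{Bazh}: the operator $L$ from (\ref{B}) is linear, $B_\alpha(\lambda,t)$ solves $Ly=0$ with $B_\alpha(\lambda,0)=1$, hence $y(t)=y_0B_\alpha(\lambda,t)$ satisfies $Ly=y_0\,LB_\alpha=0$ and $y(0)=y_0B_\alpha(\lambda,0)=y_0$; moreover $y$ inherits from $B_\alpha$ the regularity ($y\in C([0,T])$, $y',\partial_t^\alpha y\in C((0,T))$) required of a solution of (\ref{BCauchyIN}). Thus the real content of the lemma is the uniqueness of (\ref{BCauchySolutionIN}).

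For uniqueness I would take two solutions $y_1,y_2$, set $w=y_1-y_2$, and observe that $w$ solves $Lw=0$ with $w(0)=0$; the goal is then $w\equiv0$. The idea is to reduce the fractional differential equation to a linear Volterra integral equation of the second kind. Integrating $w'(t)=-\lambda w(t)-\lambda\gamma\,\partial_t^\alpha w(t)$ over $[0,t]$ and using $\int_0^t\partial_s^\alpha w(s)\,ds=(I^{1-\alpha}w)(t)=\int_0^t\omega_{1-\alpha}(t-s)\,w(s)\,ds$ — the boundary value $(I^{1-\alpha}w)(0^+)$ vanishing because $w$ is continuous with $w(0)=0$ — one arrives at
\[
w(t)=-\int_0^t\Bigl(\lambda+\lambda\gamma\,\omega_{1-\alpha}(t-s)\Bigr)w(s)\,ds,\qquad 0\le t\le T.
\]
Since $\alpha\in(0,1)$, the kernel $\lambda+\lambda\gamma\,\omega_{1-\alpha}(\tau)=\lambda+\lambda\gamma\,\tau^{-\alpha}/\Gamma(1-\alpha)$ is weakly singular but locally integrable, so the classical theory of Volterra equations with integrable kernels (equivalently, a Gronwall-type estimate adapted to weakly singular kernels: iterate the inequality $|w(t)|\le C\int_0^t(t-s)^{-\alpha}|w(s)|\,ds$ and use that the iterated kernels built from $\tau^{-\alpha}$ decay) forces $w\equiv0$ on $[0,T]$, which proves the claim.

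An alternative route is the Laplace transform: applying $\mathcal L$ to $Lw=0$, with $\mathcal L[w'](p)=p\widehat w(p)$ (as $w(0)=0$) and $\mathcal L[\partial_t^\alpha w](p)=p^\alpha\widehat w(p)$ (again since $(I^{1-\alpha}w)(0^+)=0$), yields $(p+\lambda+\lambda\gamma p^\alpha)\widehat w(p)=0$; as $p+\lambda+\lambda\gamma p^\alpha\ne0$ for $\operatorname{Re}p$ sufficiently large, $\widehat w\equiv0$ and hence $w\equiv0$. In either approach the only genuine difficulty — and the step I expect to require the most care — is justifying the formal manipulations under the modest regularity available: $w$ is only assumed continuous on $[0,T]$ while $w',\partial_t^\alpha w$ are continuous merely on the open interval $(0,T)$, so one must control their possible growth as $t\to0^+$ and, in particular, verify rigorously that the fractional-integral initial term $(I^{1-\alpha}w)(0^+)$ is indeed zero. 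Once the passage to the weakly singular Volterra equation is legitimized, the uniqueness conclusion is entirely standard.
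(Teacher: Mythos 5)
The paper offers no proof of Lemma \ref{BazhEquation} at all: it is simply quoted from Bazhlekova et al. \cite{Bazh} (see also \cite{AshurovVaisova}), where $B_\alpha(\lambda,t)$ is constructed via Laplace transform/resolvent analysis, which is also how the kernel representation (\ref{BInt}) used later in the paper arises. Your argument is therefore a genuinely different, self-contained route, and it is correct: existence follows from linearity of $L$ plus Lemma \ref{Bazh} (so $y_0B_\alpha(\lambda,t)$ solves (\ref{BCauchyIN}) with the right initial value and regularity), and uniqueness follows from reducing $Lw=0$, $w(0)=0$ to the weakly singular Volterra equation $w(t)=-\int_0^t\bigl[\lambda+\lambda\gamma\,\omega_{1-\alpha}(t-s)\bigr]w(s)\,ds$ and invoking the singular Gronwall/iterated-kernel argument. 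The step you flag as delicate is in fact harmless: integrating the equation over $(\epsilon,t)$ and letting $\epsilon\to0^+$ uses only continuity of $w$ at $0$ (so $w(\epsilon)\to0$) and the bound $|(I^{1-\alpha}w)(\epsilon)|\le \|w\|_{C[0,\epsilon]}\,\epsilon^{1-\alpha}/\Gamma(2-\alpha)\to0$; no growth control of $w'$ or $\partial_t^\alpha w$ near $0$ beyond what the equation itself supplies is needed. The trade-off between the two approaches: your Volterra argument is elementary and local, valid on any finite interval without growth hypotheses, but it only settles well-posedness and does not produce the explicit formula for $B_\alpha$, whereas the transform construction in \cite{Bazh} delivers (\ref{BInt}), on which the Main Lemma and hence the rest of the paper depend. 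One caution: your Laplace-transform alternative for uniqueness presupposes that $\widehat w$ exists, i.e.\ an a priori growth bound on $w$ at infinity (or a truncation to $[0,T]$), so it should be treated as secondary to the Volterra route rather than an equally complete argument.
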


The next main lemma states that if $t_0$ is large enough, then the derivative
$\partial_\alpha B_\alpha (\lambda, t_0)$ is negative for $\lambda\geq \lambda_1>0$, where $\lambda_1$ is the first eigenvalue of the operator $A$.
\begin{lemma}(\textbf{Main Lemma).} Let $\gamma>0$, $\lambda\geq \lambda_1$ and $\alpha\in (0,1)$ be given numbers. There exists a positive number $T_0=T_0(\gamma, \alpha, \lambda_1)\geq 1$ such that for any $t_0$, $T_0\leq t_0\leq T$, one has
	\begin{equation}\label{mainT0}
		\partial_\alpha B_\alpha (\lambda, t_0)<0.
	\end{equation}
	
\end{lemma}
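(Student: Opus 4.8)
The plan is to start from the Bazhlekova--Jin--Lazarov--Zhou integral representation~(\ref{BInt}) and differentiate it with respect to $\alpha$. First I would justify differentiation under the integral sign: $b_\alpha(\lambda,r)$ is smooth in $\alpha$ on $(0,1)$, and the auxiliary estimates of Section~4 provide, for $t\ge 1$ and $\lambda\ge\lambda_1$, an $r$-integrable majorant of $e^{-rt}\,|\partial_\alpha b_\alpha(\lambda,r)|$ that is locally uniform in $\alpha$, so that
\[
\partial_\alpha B_\alpha(\lambda,t)=\int_0^\infty e^{-rt}\,\partial_\alpha b_\alpha(\lambda,r)\,dr .
\]
Writing $b_\alpha=\frac{\gamma\lambda\, r^\alpha\sin\alpha\pi}{\pi D}$ with $D=D(\lambda,r)=(-r+\lambda\gamma r^\alpha\cos\alpha\pi+\lambda)^2+(\lambda\gamma r^\alpha\sin\alpha\pi)^2$, one has $\partial_\alpha b_\alpha=b_\alpha\big(\ln r+\pi\cot\alpha\pi-\partial_\alpha D/D\big)$, as computed in Section~3; the decisive feature is the term $\ln r$, the only unbounded ingredient as $r\to0^{+}$, and it is negative there.

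The core of the proof is a sign/size estimate for $\partial_\alpha b_\alpha$ near $r=0$ that is uniform in $\lambda\ge\lambda_1$: there exist $\delta=\delta(\gamma,\alpha,\lambda_1)\in(0,1)$ and $c_0=c_0(\gamma,\alpha)>0$ such that
\[
\partial_\alpha b_\alpha(\lambda,r)\le \frac{c_0}{\lambda}\,r^\alpha\ln r<0,\qquad 0<r\le\delta,\quad\lambda\ge\lambda_1 .
\]
To get it, shrink $\delta$ so that $\gamma r^\alpha\le\tfrac14$ and $r\le\tfrac14\lambda_1$ for $r\le\delta$; then $-r+\lambda\gamma r^\alpha\cos\alpha\pi+\lambda\in[\tfrac12\lambda,\tfrac32\lambda]$, hence $D\asymp\lambda^2$, so $b_\alpha\ge c\,\gamma\sin\alpha\pi\,r^\alpha/\lambda>0$, and a crude bound gives $|\partial_\alpha D|/D\le C\gamma r^\alpha(|\ln r|+1)$. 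Therefore, after a further ($\lambda$-independent) shrinking of $\delta$ absorbing the fixed constant $\pi\cot\alpha\pi$, one gets $\ln r+\pi\cot\alpha\pi-\partial_\alpha D/D\le\tfrac12\ln r$; multiplying by $b_\alpha>0$ and using the lower bound on $b_\alpha$ yields the displayed estimate, with all constants depending only on $\gamma,\alpha,\lambda_1$.

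With this in hand I split $\partial_\alpha B_\alpha(\lambda,t)=I_1+I_2$, where $I_1=\int_0^\delta$ and $I_2=\int_\delta^\infty$. For $I_1$, the pointwise bound, the elementary identity $\int_0^{1/t}e^{-rt}r^\alpha\ln r\,dr=t^{-\alpha-1}\big(\int_0^1 e^{-s}s^\alpha\ln s\,ds-\ln t\int_0^1 e^{-s}s^\alpha\,ds\big)$, and the fact that the integrand is $\le 0$ on $(1/t,\delta)$ (because $\delta<1$) give, for $t>1/\delta$,
\[
I_1\le-\,\frac{c_1}{\lambda}\cdot\frac{\ln t}{t^{\alpha+1}},\qquad c_1=c_1(\gamma,\alpha)>0 .
\]
For $I_2$ I use $e^{-rt}\le e^{-r}e^{-\delta(t-1)}$ for $r\ge\delta$, $t\ge1$, together with the auxiliary global bound $\int_0^\infty e^{-r}|\partial_\alpha b_\alpha(\lambda,r)|\,dr\le M(\gamma,\alpha,\lambda_1)/\lambda$ (which follows from $b_\alpha\le C/(\lambda r^\alpha)$ near $0$, from $b_\alpha\le C/\lambda$ on $r\le\lambda/2$, and from the boundedness of $\partial_\alpha b_\alpha$ across the zero of the first square in $D$, where the logarithmic-derivative formula degenerates but $\partial_\alpha b_\alpha$ itself does not), to obtain $|I_2|\le(M/\lambda)\,e^{-\delta(t-1)}$. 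Hence $\partial_\alpha B_\alpha(\lambda,t)\le\tfrac1\lambda\big(-c_1 t^{-\alpha-1}\ln t+M e^{-\delta(t-1)}\big)$, and since $e^{-\delta(t-1)}=o\big(t^{-\alpha-1}\ln t\big)$ the bracket is negative for all $t\ge T_0$, with $T_0=T_0(\gamma,\alpha,\lambda_1)\ge1$ determined by $c_1,M,\delta,\alpha$. The uniformity of the constants in $\lambda\ge\lambda_1$ then gives the Main Lemma.

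The main obstacle is precisely this uniformity in $\lambda$: proving the near-$r=0$ estimate with $\lambda$-independent constants, controlling $\partial_\alpha b_\alpha$ near the ``resonance'' $-r+\lambda\gamma r^\alpha\cos\alpha\pi+\lambda=0$, and extracting enough $\lambda$-decay from $I_2$ so that the $1/\lambda$ prefactors of $I_1$ and $I_2$ cancel and $T_0$ depends on $\lambda_1$ alone. These are exactly the auxiliary lemmas of Section~4, and they are the technical heart; once they are available the splitting argument is routine. (An alternative route would differentiate the ODE~(\ref{B}) in $\alpha$ and solve the resulting inhomogeneous Rayleigh--Stokes equation for $\partial_\alpha B_\alpha$ by Duhamel's formula, but the forcing term $-\lambda\gamma\,\partial_\alpha(\partial_t^\alpha)B_\alpha$ has indefinite sign, so one would still have to quantify a large-$t$ cancellation; the integral-representation approach seems cleaner.)
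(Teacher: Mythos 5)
Your proposal is correct and rests on the same mechanism as the paper's proof: differentiate the representation (\ref{BInt}) under the integral sign, isolate the small-$r$ region (effectively $r$ of order $1/t_0$), where the logarithm produces a negative main term of size $\lambda^{-1}t_0^{-\alpha-1}\ln t_0$, and show that all remaining contributions are $O(\lambda^{-1}t_0^{-\alpha-1})$ or exponentially small, with $1/\lambda$ prefactors throughout so that $T_0$ depends only on $\gamma,\alpha,\lambda_1$. The bookkeeping differs, though: the paper first rescales $\xi=rt_0$, so the logarithm shows up as explicit $\ln t_0$ factors, writes $\partial_\alpha B_\alpha$ as five integrals $I_1,\dots,I_5$, splits each at $\xi=c_0t_0$ (which is your fixed cut $r=\delta$ in the unscaled variable, up to constants), and encodes the negativity of the dominant term $I_{1,0}+I_{3,0}$ in Lemma \ref{Festimate}, namely $F\le-\frac34\lambda^2t_0^{2\alpha}$; you instead keep the variable $r$ and replace that cancellation by a single pointwise sign estimate $\partial_\alpha b_\alpha(\lambda,r)\le c\,\lambda^{-1}r^\alpha\ln r$ for $r\le\delta$, which is arguably tidier — since $\ln r=\ln\xi-\ln t_0$, the two computations are literally equivalent. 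Two small repairs to your sketch: there is no genuine ``resonance'' difficulty in the tail, because your logarithmic-derivative formula never degenerates (one always has $b_\alpha>0$ for $r>0$), and the uniform bound $|\partial_\alpha b_\alpha|\le C(\gamma,\alpha)(1+|\ln r|)\lambda^{-1}r^{-\alpha}$ holds for all $r>0$ simply from $D\ge(\lambda\gamma r^\alpha\sin\alpha\pi)^2$ — exactly the paper's device $f^2+g^2\ge g^2$ for the outer integrals — which at once gives $\int_\delta^\infty e^{-r}|\partial_\alpha b_\alpha|\,dr\le M(\gamma,\alpha,\delta)/\lambda$; consequently the auxiliary claim $b_\alpha\le C/\lambda$ on $r\le\lambda/2$ is not needed (and would require care near the zero of $f$ anyway).
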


The proof of the lemma is contained in Section 5. Here we calculate the derivative $\partial_\alpha B_\alpha (\lambda, t_0)$. To do this, we write $B_\alpha (\lambda, t_0)$ in a form that is convenient for us.

Let us change the variable $r t_0=\xi$ in the integral for $B_\alpha (\lambda, t_0)$. Then
\begin{equation}\label{BIntNew}
	B_\alpha (\lambda, t_0)=\frac{1}{t_0}\int\limits_0^\infty e^{-\xi} \, b_\alpha\left(\lambda, \frac{1}{t_0}\xi\right) d\xi=t_0^{\alpha-1}
	\int\limits_0^\infty e^{-\xi} b_{\alpha,1}(\lambda, t_0, \xi) d\xi,
\end{equation}
where
\[
b_{\alpha,1}(\lambda,t_0, \xi)=\frac{1}{\pi} \frac{g(\xi, \alpha)}{f^2(\xi, t_0, \alpha)+g^2(\xi, \alpha)},
\]
and
\[
g(\xi,\alpha)=\lambda \gamma \xi^\alpha\sin \alpha \pi, \,\, f(\xi, t_0, \alpha)= -\xi t_0^{\alpha-1}+\lambda \gamma \xi^\alpha \cos \alpha\pi +\lambda t_0^\alpha.
\]

The derivative $\partial_\alpha B_\alpha (\lambda, t_0)$ can be written as the sum of several terms. The first is the result of differentiating the function $t_0^{\alpha-1}$:
\[
I_1=t_0^{\alpha-1} \ln t_0\,
\int\limits_0^\infty e^{-\xi} b_{\alpha,1}(\lambda, t_0, \xi) d\xi.
\]

Let us move on to differentiating the fraction $b_{\alpha,1}(\lambda,t_0, \xi)$. After differentiating the numerator $g(\xi, \alpha)$, we get
\[
I_2=t_0^{\alpha-1}\frac{\gamma}{\pi}\,
\int\limits_0^\infty e^{-\xi}\, \frac{\lambda \xi^\alpha [\ln \xi \sin \alpha\pi +\pi \cos \alpha\pi]}{f^2(\xi, t_0, \alpha)+g^2(\xi, \alpha)} d\xi.
\]
We write the result of differentiating the denominator in the form of the following three integrals:
\[
I_3=-t_0^{\alpha-1} \ln t_0\,
\int\limits_0^\infty e^{-\xi} b_{\alpha,1}(\lambda, t_0, \xi)\frac{2\,f(\xi, t_0,\alpha)(-\xi t_0^{\alpha -1}+\lambda t_0^\alpha)}{f^2(\xi, t_0, \alpha)+g^2(\xi, \alpha)} d\xi,
\]
\[
I_4=-t_0^{\alpha-1} \,
\int\limits_0^\infty e^{-\xi} b_{\alpha,1}(\lambda, t_0, \xi)\frac{2\,f(\xi, t_0,\alpha)\lambda\gamma \xi^\alpha [\ln \xi \cos \alpha\pi -\pi \sin \alpha\pi]}{f^2(\xi, t_0, \alpha)+g^2(\xi, \alpha)} d\xi,
\]
and
\[
I_5=-t_0^{\alpha-1} \,
\int\limits_0^\infty e^{-\xi} b_{\alpha,1}(\lambda, t_0, \xi)\frac{2\, g(\xi, \alpha)\lambda\gamma \xi^\alpha [\ln \xi \sin \alpha\pi +\pi \cos \alpha\pi]}{f^2(\xi, t_0, \alpha)+g^2(\xi, \alpha)} d\xi.
\]
Note here that the first two integrals are the result of differentiating the function $f$, and the third one is the result of differentiating the function $g$.

Now we can write
\begin{equation}\label{derivative}
	\partial_\alpha B_\alpha (\lambda, t_0)=\sum\limits_{k=1}^5 I_k.
\end{equation}

\section{Auxiliary Lemmas}
This section contains some auxiliary statements that are necessary in the proof of the main lemma.

Let us start with a simple statement.
\begin{lemma}\label{gamma} Let $a$ be any real number and $\varepsilon>0$. Then there exists a number $R_0=R_0(a, \varepsilon)$ such that for $R>R_0$ one has
	\[
	\int\limits_R^\infty e^{-r}r^a dr = O(R^{-\varepsilon}).
	\]
\end{lemma}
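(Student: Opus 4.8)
The plan is to peel off an exponentially decaying factor and estimate what remains by a constant multiple of $R^{-\varepsilon}$. The one elementary fact I would use is that $r^{a+\varepsilon}e^{-r/2}\to 0$ as $r\to +\infty$ for every real exponent $a+\varepsilon$, since the exponential dominates any power. Accordingly, I would first choose a number $R_0=R_0(a,\varepsilon)\ge 1$ so large that
\[
r^{a+\varepsilon}e^{-r/2}\le 1\qquad\text{for all }r\ge R_0 .
\]

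Now fix any $R>R_0$. On the half-line $[R,\infty)$ I would factor the integrand as $e^{-r}r^a=\bigl(r^{a+\varepsilon}e^{-r/2}\bigr)\bigl(r^{-\varepsilon}e^{-r/2}\bigr)$, bound the first bracket by $1$ by the choice of $R_0$, and bound $r^{-\varepsilon}\le R^{-\varepsilon}$ (valid since $\varepsilon>0$ and $r\ge R$). This yields
\[
\int\limits_R^\infty e^{-r}r^a\,dr\le R^{-\varepsilon}\int\limits_R^\infty e^{-r/2}\,dr=2R^{-\varepsilon}e^{-R/2}\le 2R^{-\varepsilon},
\]
where the last inequality holds because $R>R_0\ge 1$ forces $e^{-R/2}<1$. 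Hence $\int_R^\infty e^{-r}r^a\,dr=O(R^{-\varepsilon})$, which is the assertion.

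There is essentially no obstacle here: the tail $\int_R^\infty e^{-r}r^a\,dr$ is the incomplete gamma function $\Gamma(a+1,R)$, and $\Gamma(a+1,R)\sim R^{a}e^{-R}$ as $R\to\infty$, so it decays faster than any negative power of $R$ and the lemma is a very weak consequence of this. The only point deserving a moment's care is that the argument must cover \emph{all} real $a$, including $a<0$ and, in particular, the case $a+\varepsilon<0$; this causes no difficulty, since $R_0$ is taken large and the bound $r^{a+\varepsilon}e^{-r/2}\le 1$ for $r\ge R_0$ holds irrespective of the sign of $a+\varepsilon$.
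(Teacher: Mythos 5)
Your proof is correct: the choice of $R_0$ with $r^{a+\varepsilon}e^{-r/2}\le 1$ for $r\ge R_0$ is legitimate for every real $a$ (the exponential dominates any power), the factorization $e^{-r}r^a=\bigl(r^{a+\varepsilon}e^{-r/2}\bigr)\bigl(r^{-\varepsilon}e^{-r/2}\bigr)$ together with $r^{-\varepsilon}\le R^{-\varepsilon}$ is valid on $[R,\infty)$, and the final integral $\int_R^\infty e^{-r/2}\,dr=2e^{-R/2}$ closes the estimate; in fact you obtain the stronger bound $2R^{-\varepsilon}e^{-R/2}$. Your route differs from the paper's. The paper splits into two cases: for $a\le 0$ it bounds $r^a\le R^a$ on the tail and uses $\int_R^\infty e^{-r}\,dr=e^{-R}$, giving directly $R^{a}e^{-R}=O(R^{-\varepsilon})$; for $a>0$ it integrates by parts $[a]+1$ times, producing the boundary terms $R^{a}e^{-R}, aR^{a-1}e^{-R},\dots$ and reducing the remaining integral to an exponent $a-[a]-1<0$, i.e.\ to the first case. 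Your argument replaces the case distinction and the repeated integration by parts with the single trick of splitting $e^{-r}=e^{-r/2}\cdot e^{-r/2}$ and absorbing the power $r^{a+\varepsilon}$ into one exponential factor; this is shorter and uniform in $a$. What the paper's computation buys in exchange is the explicit structure of the tail (essentially the asymptotic expansion of the incomplete gamma function, with leading term $R^{a}e^{-R}$), which is the sharper information you also allude to in your closing remark; for the purposes of the lemma, where only $O(R^{-\varepsilon})$ is ever used, the two proofs are equally serviceable.
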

If $a>-1$ then
\begin{equation}\label{Gamma}
	\Gamma (a+1)=\int\limits_0^\infty e^{-r}r^a dr,
\end{equation}	
and therefore the integral is $o(1)$. But we need a more precise estimate.
\begin{proof} If $a\leq 0$, then for $R>R_0$ one has
	\[
	\int\limits_R^\infty e^{-r}r^a dr\leq R^{\,a} e^{-R} = O(R^{-\varepsilon}).
	\]
	Let $a>0$ and $[a]$ be an integer part of $a$. Integrating by parts $[a]+1$ times, we get
	\[
	\int\limits_R^\infty e^{-r}r^a dr=R^{\,a}e^{-R}+a R^{a-1}e^{-R}+\cdots+ a (a-1)\cdots (a-[a])\int\limits_R^\infty e^{-r}r^{a-[a]-1} dr = O(R^{-\varepsilon}).
	\]
\end{proof}
Recall that in our reasoning, always $\lambda\geq \lambda_1>0$, where $\lambda_1$ is the first eigenvalue of the operator $A$. Let us choose a constant $c_0$ as follows:
\begin{equation}\label{c0}
	\left\{
	\begin{aligned}
		&1< c_0< \frac{1}{2} \big(\gamma +\frac{1}{\lambda_1}\big)^{-1},\,\,\text{if}\,\,\, \gamma +\frac{1}{\lambda_1}< \frac{1}{2}; \\
		&0<c_0^\alpha< \frac{1}{2} \big(\gamma +\frac{1}{\lambda_1}\big)^{-1},\,\,\,\text{if}\,\,\,\gamma +\frac{1}{\lambda_1}\geq \frac{1}{2}.
	\end{aligned}
	\right.
\end{equation}
Note that in the second case $c_0<1$.

\begin{lemma}\label{festimate} Let $\xi\leq c_0\, t_0$. Then
	\[
	\frac{1}{2} \lambda t_0^\alpha\leq f(\xi, t_0, \alpha)\leq \frac{3}{2} \lambda t_0^\alpha.
	\]
	
\end{lemma}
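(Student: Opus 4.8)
The plan is to control the deviation of $f(\xi,t_0,\alpha)$ from its leading term $\lambda t_0^\alpha$. Writing
\[
f(\xi,t_0,\alpha)-\lambda t_0^\alpha=-\xi t_0^{\alpha-1}+\lambda\gamma\xi^\alpha\cos\alpha\pi ,
\]
it suffices to show that the right-hand side is bounded in absolute value by $\tfrac12\lambda t_0^\alpha$ whenever $\xi\le c_0 t_0$; the two-sided estimate in the statement then follows at once. By the triangle inequality and $|\cos\alpha\pi|\le 1$,
\[
\bigl|-\xi t_0^{\alpha-1}+\lambda\gamma\xi^\alpha\cos\alpha\pi\bigr|\le \xi t_0^{\alpha-1}+\lambda\gamma\xi^\alpha .
\]
Since the maps $x\mapsto x$ and $x\mapsto x^\alpha$ are increasing on $[0,\infty)$ and $0\le\xi\le c_0 t_0$, we get $\xi t_0^{\alpha-1}\le c_0 t_0^\alpha$ and $\xi^\alpha\le c_0^\alpha t_0^\alpha$, so the whole expression is at most $t_0^\alpha\bigl(c_0+\lambda\gamma c_0^\alpha\bigr)$. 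Thus everything reduces to the numerical inequality $c_0+\lambda\gamma c_0^\alpha\le \tfrac12\lambda$.

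To verify this I would treat the two cases in the definition \eqref{c0} of $c_0$ separately. In the first case $c_0>1$, hence $c_0^\alpha<c_0$ and $c_0+\lambda\gamma c_0^\alpha< c_0(1+\lambda\gamma)$; the defining bound $c_0<\tfrac12(\gamma+1/\lambda_1)^{-1}$ combined with $\lambda\ge\lambda_1$ — which gives $(\gamma+1/\lambda_1)^{-1}\le \lambda(1+\lambda\gamma)^{-1}$ — yields $c_0(1+\lambda\gamma)<\tfrac12\lambda$. In the second case $c_0<1$, so now $c_0<c_0^\alpha$ and $c_0+\lambda\gamma c_0^\alpha< c_0^\alpha(1+\lambda\gamma)$; the bound $c_0^\alpha<\tfrac12(\gamma+1/\lambda_1)^{-1}$ together with the same consequence of $\lambda\ge\lambda_1$ gives $c_0^\alpha(1+\lambda\gamma)<\tfrac12\lambda$. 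Either way $|f(\xi,t_0,\alpha)-\lambda t_0^\alpha|<\tfrac12\lambda t_0^\alpha$, which is the assertion.

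The argument is elementary and essentially without obstacles; the only points deserving attention are (i) noting that once one passes to absolute values the sign of $\cos\alpha\pi$ is irrelevant, so no separate treatment of $\alpha$ near $1$ is needed, and (ii) invoking the hypothesis $\lambda\ge\lambda_1$ at exactly the right place in order to pass from the constant $\gamma+1/\lambda_1$ appearing in \eqref{c0} to the $\lambda$-dependent quantity $\tfrac12\lambda(1+\lambda\gamma)^{-1}$ that is actually needed. I also note that the resulting estimate is uniform in $t_0$, which is what makes Lemma~\ref{festimate} usable on the whole range $T_0\le t_0\le T$ in the sequel.
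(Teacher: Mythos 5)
Your proof is correct and follows essentially the same route as the paper: bound the two non-leading terms of $f$ by $c_0 t_0^\alpha$ and $\lambda\gamma c_0^\alpha t_0^\alpha$, then use the case distinction $c_0>1$ versus $c_0<1$ (comparing $c_0$ with $c_0^\alpha$) together with $\lambda\ge\lambda_1$ and the defining inequality \eqref{c0} to make the total deviation at most $\tfrac12\lambda t_0^\alpha$. Phrasing it as a single centered estimate $|f-\lambda t_0^\alpha|\le\tfrac12\lambda t_0^\alpha$ rather than as separate lower and upper bounds is only a cosmetic repackaging of the paper's argument.
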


\begin{proof}Let  $\xi\leq c_0\, t_0$. If $c_0>1$, then
	\[
	f(\xi, t_0, \alpha)\geq -\xi t_0^{\alpha-1}-\lambda\gamma\xi^\alpha+\lambda t_0^\alpha\geq -c_0t_0^{\alpha}-\lambda\gamma c_0^\alpha t_0^{\alpha}+\lambda t_0^\alpha\geq
	\]
	\[
	\geq \lambda t_0^\alpha \left[ 1- c_0 \big(\frac{1}{\lambda_1}+\gamma\big)\right]\geq \frac{1}{2} \lambda t_0^\alpha,
	\]
	and
	\[
	f(\xi, t_0, \alpha)\leq \xi t_0^{\alpha-1}+\lambda\gamma\xi^\alpha+\lambda t_0^\alpha\leq c_0t_0^{\alpha}+\lambda\gamma c_0^\alpha t_0^{\alpha}+\lambda t_0^\alpha\leq
	\]
	\[
	\leq \lambda t_0^\alpha \left[ 1+ c_0 \big(\frac{1}{\lambda_1}+\gamma\big)\right]\leq \frac{3}{2} \lambda t_0^\alpha.
	\]
	If $c_0<1$, then
	\[
	f(\xi, t_0, \alpha)\geq-c_0t_0^{\alpha}-\lambda\gamma c_0^\alpha t_0^{\alpha}+\lambda t_0^\alpha\geq
	\]
	\[
	\geq \lambda t_0^\alpha \left[ 1- c_0^\alpha \big(\frac{1}{\lambda_1}+\gamma\big)\right]\geq \frac{1}{2} \lambda t_0^\alpha,
	\]
	and
	\[
	f(\xi, t_0, \alpha)\leq\lambda t_0^\alpha \left[ 1+ c_0^\alpha \big(\frac{1}{\lambda_1}+\gamma\big)\right]\leq \frac{3}{2} \lambda t_0^\alpha.
	\]
\end{proof}
In what follows, it is important for us that for the same $\xi\leq c_0\, t_0$ the function
\[
F(\xi, t_0, \alpha)=f^2(\xi, t_0, \alpha) +g^2(\xi, \alpha) -2f(\xi, t_0, \alpha) (-\xi t_0^{\alpha-1} +\lambda t_0^\alpha)
\]
be negative.
\begin{lemma}\label{Festimate} Let $\xi\leq c_0\, t_0$. Then
	\[
	F(\xi, t_0, \alpha)\leq -\frac{3}{4}\lambda^2 t_0^{2\alpha}.
	\]
	
\end{lemma}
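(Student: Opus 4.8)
The plan is to first collapse $F$ into a two‑term expression by pure algebra, and then to bound those two terms on the range $\xi\le c_0t_0$ in the same spirit as the proof of Lemma~\ref{festimate}; the algebraic step is what makes the estimate tractable, because it exposes a cancellation that a direct use of Lemma~\ref{festimate} would destroy.

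\textbf{Algebraic reduction.} I would set $P=P(\xi,t_0,\alpha)=-\xi t_0^{\alpha-1}+\lambda t_0^\alpha$, the quantity multiplying $-2f$ in the definition of $F$. Since $f-P=\lambda\gamma\xi^\alpha\cos\alpha\pi$ and $g=\lambda\gamma\xi^\alpha\sin\alpha\pi$, one has $(f-P)^2+g^2=\lambda^2\gamma^2\xi^{2\alpha}$, and therefore
\[
F=f^2+g^2-2fP=(f-P)^2+g^2-P^2=\lambda^2\gamma^2\xi^{2\alpha}-\bigl(\lambda t_0^\alpha-\xi t_0^{\alpha-1}\bigr)^2 .
\]
Thus the assertion becomes equivalent to $\bigl(\lambda t_0^\alpha-\xi t_0^{\alpha-1}\bigr)^2-\lambda^2\gamma^2\xi^{2\alpha}\ge\frac34\lambda^2t_0^{2\alpha}$, and from here everything is elementary estimation.

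\textbf{Two elementary bounds.} For $\xi\le c_0t_0$ I would use $\xi^{2\alpha}\le(c_0t_0)^{2\alpha}$, which gives $\lambda^2\gamma^2\xi^{2\alpha}\le\lambda^2\gamma^2c_0^{2\alpha}t_0^{2\alpha}$, and $\xi t_0^{\alpha-1}\le c_0t_0^\alpha$, which gives $P\ge(\lambda-c_0)t_0^\alpha$. One also checks that $c_0<\lambda_1\le\lambda$ in both cases of (\ref{c0}): in the first case $c_0<\frac12(\gamma+\lambda_1^{-1})^{-1}\le\lambda_1$, and in the second case $0<c_0<1$, while if in addition $\lambda_1<1$ then $c_0<c_0^\alpha<\frac12(\gamma+\lambda_1^{-1})^{-1}\le\frac12\lambda_1$. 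Hence $P>0$ and $P^2\ge(\lambda-c_0)^2t_0^{2\alpha}$, so that
\[
F(\xi,t_0,\alpha)\le\Bigl(\lambda^2\gamma^2c_0^{2\alpha}-(\lambda-c_0)^2\Bigr)t_0^{2\alpha}.
\]

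\textbf{The scalar inequality (the main obstacle).} It then remains to prove $(\lambda-c_0)^2-\lambda^2\gamma^2c_0^{2\alpha}\ge\frac34\lambda^2$ for every $\lambda\ge\lambda_1$ and $\alpha\in(0,1)$; dividing by $\lambda^2$ this reads $(1-c_0/\lambda)^2-\gamma^2c_0^{2\alpha}\ge\frac34$. Since $c_0/\lambda\le c_0/\lambda_1$, the left‑hand side is smallest at $\lambda=\lambda_1$, and there I would bound $(1-c_0/\lambda_1)^2$ from below and $\gamma^2c_0^{2\alpha}$ from above by means of the normalizations of $c_0$ in (\ref{c0}): the inequality $c_0(\gamma+\lambda_1^{-1})<\frac12$ together with $c_0>1$ (so that $c_0^\alpha<c_0$) in the first case, and $c_0^\alpha(\gamma+\lambda_1^{-1})<\frac12$ together with $0<c_0<1$ in the second case — distinguishing the two cases exactly as in the proof of Lemma~\ref{festimate}. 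This final chain of elementary estimates, kept uniform in $\lambda$ and $\alpha$, is the only genuinely computational point of the lemma; everything else follows from the algebraic identity of the first step.
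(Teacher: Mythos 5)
Your algebraic reduction is exactly the identity the paper itself uses, $F=(\lambda\gamma\xi^\alpha)^2-(\lambda t_0^\alpha-\xi t_0^{\alpha-1})^2$ (obtained there by factoring a difference of squares rather than by completing the square), and your second step, including the check that $c_0<\lambda_1$ in both cases of (\ref{c0}), is correct; moreover at $\xi=c_0t_0$ both of your bounds are equalities. The genuine gap is the final scalar inequality, which you call ``the only genuinely computational point'' but never carry out: $(1-c_0/\lambda)^2-\gamma^2c_0^{2\alpha}\ge\frac34$ does \emph{not} follow from (\ref{c0}), and is in fact false for admissible parameters. The normalization $c_0(\gamma+\lambda_1^{-1})<\frac12$ (resp.\ $c_0^\alpha(\gamma+\lambda_1^{-1})<\frac12$) only yields
\[
\Bigl(1-\frac{c_0}{\lambda}\Bigr)^2-\gamma^2c_0^{2\alpha}\ \ge\ \Bigl(1-\frac{c_0}{\lambda_1}-\gamma c_0\Bigr)\Bigl(1-\frac{c_0}{\lambda_1}+\gamma c_0\Bigr)\ >\ \frac14
\]
in the case $c_0>1$ (and the analogous bound with $c_0^\alpha$ in the other case), and $\frac14$ cannot be pushed to $\frac34$: take, for instance, $\lambda=\lambda_1=4$, $\gamma=\frac1{20}$ (so $\gamma+\lambda_1^{-1}=0.3<\frac12$ and $c_0=\frac85$ is admissible in the first case of (\ref{c0})) and $\alpha$ close to $1$; then $(1-c_0/\lambda)^2-\gamma^2c_0^{2\alpha}\approx 0.35<\frac34$, and correspondingly $F(c_0t_0,t_0,\alpha)\approx-5.7\,t_0^{2\alpha}$, which is larger than $-\frac34\lambda^2t_0^{2\alpha}=-12\,t_0^{2\alpha}$. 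Since your step 2 is sharp at $\xi=c_0t_0$, no sharper estimation can rescue the constant $\frac34$ within this framework: what your route honestly proves is $F\le-\frac14\lambda^2t_0^{2\alpha}$.

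For fairness, the printed constant is an artifact of the paper's own proof, which bounds the product of the two factors by multiplying the \emph{upper} bound $\frac32\lambda t_0^\alpha$ of the positive factor with the upper bound $-\frac12\lambda t_0^\alpha$ of the negative factor; a valid upper bound for such a product requires a \emph{lower} bound on the positive factor, and the legitimate conclusion from the paper's own estimates is again $-\frac14\lambda^2 t_0^{2\alpha}$. The precise constant is immaterial downstream, since it is absorbed into $C_{0,0}$ in (\ref{J}). So your proposal should either prove and use the weaker (and correct) bound $F\le-\frac14\lambda^2t_0^{2\alpha}$, noting that it suffices for the Main Lemma, or strengthen the choice of $c_0$ (e.g.\ require $c_0(\gamma+\lambda_1^{-1})<\frac18$) if the constant $\frac34$ is to be kept; as written, its last step is an unproved claim that is false under (\ref{c0}).
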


\begin{proof}We have
	\[
	F(\xi, t_0, \alpha)=f\cdot(f+2 \xi t_0^{\alpha-1} -2 \lambda t_0^\alpha)	+g^2=f\cdot(\xi t_0^{\alpha-1} -\lambda t_0^\alpha+\lambda\gamma\xi^\alpha \cos\alpha \pi)	+g^2=
	\]
	\[
	=(\lambda\gamma\xi^\alpha \cos\alpha \pi)^2-(\lambda t_0^\alpha-\xi t_0^{\alpha-1})^2 + (\lambda\gamma\xi^\alpha \sin\alpha \pi)^2=
	\]
	\[
	=(\lambda\gamma\xi^\alpha)^2-(\lambda t_0^\alpha-\xi t_0^{\alpha-1})^2=(\lambda\gamma\xi^\alpha+\lambda t_0^\alpha-\xi t_0^{\alpha-1})(\lambda\gamma\xi^\alpha+\xi t_0^{\alpha-1}-\lambda t_0^\alpha).
	\]
	Let  $\xi\leq c_0\, t_0$ and $c_0>1$. Then by Lemma \ref{festimate},
	\[
	F(\xi, t_0, \alpha)\leq (\lambda\gamma (c_0 t_0)^\alpha+\lambda t_0^\alpha +c_0 t_0 t_0^{\alpha-1})(\lambda\gamma(c_0 t_0)^\alpha+\xi t_0^{\alpha-1}-\lambda t_0^\alpha)\leq -\frac{3}{4}\lambda^2 t_0^{2\alpha}.
	\]
	This estimate is established in exactly the same way for $c_0<1$.
	
\end{proof}

\begin{lemma}\label{b0estimate} Let $\xi\leq c_0\, t_0$. Then
	\[
	\frac{4 \gamma \sin\alpha\pi}{10\pi}\cdot \frac{\xi^\alpha}{\lambda t_0^{2\alpha}}\leq	b_{\alpha,1}(\lambda, t_0, \xi)\leq \frac{4\gamma}{\pi} \cdot \frac{\xi^\alpha}{\lambda t_0^{2\alpha}}.
	\]
	
\end{lemma}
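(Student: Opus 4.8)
The plan is to go back to the explicit formula
\[
b_{\alpha,1}(\lambda,t_0,\xi)=\frac{1}{\pi}\,\frac{g(\xi,\alpha)}{f^2(\xi,t_0,\alpha)+g^2(\xi,\alpha)},
\qquad g(\xi,\alpha)=\lambda\gamma\xi^\alpha\sin\alpha\pi,
\]
and to sandwich the denominator $f^2+g^2$ between two fixed constant multiples of $\lambda^2 t_0^{2\alpha}$ on the range $\xi\le c_0 t_0$. The control of $f^2$ is already available: squaring the two inequalities of Lemma \ref{festimate} gives $\tfrac14\lambda^2 t_0^{2\alpha}\le f^2(\xi,t_0,\alpha)\le\tfrac94\lambda^2 t_0^{2\alpha}$ for all $\xi\le c_0 t_0$.

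Next I would bound $g^2$ from above. Since $0\le\sin\alpha\pi\le1$, it is enough to estimate $\lambda\gamma\xi^\alpha$. Here one reuses the same two-case analysis as in the proof of Lemma \ref{festimate}: if $c_0>1$ then $c_0^\alpha<c_0$ and the first line of (\ref{c0}) gives $\gamma c_0^\alpha<\gamma c_0\le c_0(\gamma+\tfrac1{\lambda_1})<\tfrac12$; if $c_0<1$ then the second line of (\ref{c0}) gives $\gamma c_0^\alpha\le c_0^\alpha(\gamma+\tfrac1{\lambda_1})<\tfrac12$. In either case $\xi\le c_0 t_0$ yields $\xi^\alpha\le c_0^\alpha t_0^\alpha$, hence $\lambda\gamma\xi^\alpha\le\tfrac12\lambda t_0^\alpha$ and therefore $0\le g^2(\xi,\alpha)\le\tfrac14\lambda^2 t_0^{2\alpha}$. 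Adding this to the bounds on $f^2$ gives
\[
\tfrac14\,\lambda^2 t_0^{2\alpha}\ \le\ f^2(\xi,t_0,\alpha)+g^2(\xi,\alpha)\ \le\ \tfrac{10}{4}\,\lambda^2 t_0^{2\alpha}.
\]

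Finally I would substitute these into the formula for $b_{\alpha,1}$. For the upper estimate, bound the numerator by $g\le\lambda\gamma\xi^\alpha$ and the denominator from below by $\tfrac14\lambda^2 t_0^{2\alpha}$, obtaining $b_{\alpha,1}\le\frac{4\gamma}{\pi}\,\xi^\alpha/(\lambda t_0^{2\alpha})$. For the lower estimate, keep the numerator as $g=\lambda\gamma\xi^\alpha\sin\alpha\pi$ and bound the denominator from above by $\tfrac{10}{4}\lambda^2 t_0^{2\alpha}$, obtaining $b_{\alpha,1}\ge\frac{4\gamma\sin\alpha\pi}{10\pi}\,\xi^\alpha/(\lambda t_0^{2\alpha})$. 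These are exactly the two claimed inequalities. I do not foresee a genuine obstacle: the only point requiring care is to carry the two cases $c_0>1$ and $c_0<1$ through the bound on $\gamma\xi^\alpha$, and to note that the factor $\sin\alpha\pi$ must genuinely be retained in the lower bound, since it has no positive lower bound uniform in $\alpha\in(0,1)$.
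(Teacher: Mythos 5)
Your proposal is correct and follows essentially the same route as the paper: Lemma \ref{festimate} gives the two-sided bound on $f^2$, the choice (\ref{c0}) of $c_0$ gives $\gamma\xi^\alpha\le\tfrac12 t_0^\alpha$ and hence $g^2\le\tfrac14\lambda^2t_0^{2\alpha}$, yielding the sandwich $\tfrac14\lambda^2t_0^{2\alpha}\le f^2+g^2\le\tfrac{10}{4}\lambda^2t_0^{2\alpha}$ (the paper's (\ref{fg})), from which both inequalities follow by substitution. In fact you spell out the two-case bound on $\gamma c_0^\alpha$ that the paper leaves implicit in "this easily implies the assertion."
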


\begin{proof}Let us first estimate the denominator of the fraction $b_{\alpha,1}$. Apply Lemma \ref{festimate} to obtain
	\[
	f^2+g^2\leq \frac{9}{4}\big(\lambda t_0^\alpha\big)^2+\big(\lambda\gamma\xi^\alpha\big)^2\leq \frac{10}{4}\big(\lambda t_0^\alpha\big)^2.
	\]
	Therefore
	\begin{equation}\label{fg}
		\frac{1}{4}\big(\lambda t_0^\alpha\big)^2\leq f^2+g^2\leq \frac{10}{4}\big(\lambda t_0^\alpha\big)^2.
	\end{equation}
	This easily implies the assertion of the lemma.
	
\end{proof}
\begin{lemma}\label{binftyestimate} Let $\xi> c_0\, t_0$. Then
	\[
	b_{\alpha,1}(\lambda, t_0, \xi)\leq\frac{1}{\pi\gamma\lambda \sin\alpha\pi}\cdot \frac{1}{\xi^\alpha}.
	\]
	
\end{lemma}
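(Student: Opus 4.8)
The plan is essentially a one-line estimate: discard the nonnegative term $f^2(\xi,t_0,\alpha)$ from the denominator of the fraction defining $b_{\alpha,1}$. Recall that
\[
b_{\alpha,1}(\lambda,t_0,\xi)=\frac{1}{\pi}\,\frac{g(\xi,\alpha)}{f^2(\xi,t_0,\alpha)+g^2(\xi,\alpha)},\qquad g(\xi,\alpha)=\lambda\gamma\xi^\alpha\sin\alpha\pi,
\]
and note first that $g(\xi,\alpha)>0$, since $\lambda,\gamma,\xi>0$ and $0<\alpha<1$ forces $\sin\alpha\pi>0$. Then, since $f^2(\xi,t_0,\alpha)\ge 0$ gives $f^2+g^2\ge g^2$, I would conclude
\[
b_{\alpha,1}(\lambda,t_0,\xi)\le\frac{1}{\pi}\,\frac{g(\xi,\alpha)}{g^2(\xi,\alpha)}=\frac{1}{\pi\,g(\xi,\alpha)}=\frac{1}{\pi\gamma\lambda\sin\alpha\pi}\cdot\frac{1}{\xi^\alpha},
\]
which is exactly the asserted bound.

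It is worth observing that this estimate in fact holds for every $\xi>0$; the hypothesis $\xi>c_0 t_0$ is only recorded because this is the range in which the bound is subsequently used. Namely, it is meant to control the tail $\int_{c_0 t_0}^\infty e^{-\xi}b_{\alpha,1}(\lambda,t_0,\xi)\,d\xi$ appearing in \eqref{BIntNew}, as well as the analogous tails in the integrals $I_1,\dots,I_5$ of \eqref{derivative}: there the factor $\xi^{-\alpha}$ multiplies $e^{-\xi}$, and Lemma~\ref{gamma} then makes these tails super-exponentially small in $t_0$.

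There is no genuine obstacle here; the only subtlety is that the bound must be of the form (constant independent of $t_0$) times $\xi^{-\alpha}$. This is automatic once $f^2$ is simply dropped, whereas a cruder estimate such as $b_{\alpha,1}\le\frac{1}{2\pi|f|}$, obtained from $f^2+g^2\ge 2|f||g|$, would carry an implicit $t_0$-dependence through $f$ on the range $\xi>c_0 t_0$ and would be unusable in the later arguments.
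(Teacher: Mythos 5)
Your proof is correct and coincides with the paper's own argument: the authors also simply drop $f^2$ from the denominator, obtaining $b_{\alpha,1}\le \frac{1}{\pi g}=\frac{1}{\pi\gamma\lambda\sin\alpha\pi}\,\xi^{-\alpha}$. Your additional remark that the hypothesis $\xi>c_0t_0$ is not actually needed for the bound itself (only for its later use) is accurate.
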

\begin{proof}By definition
	\[
	\frac{1}{\pi} \frac{g(\xi, \alpha)}{f^2(\xi, t_0, \alpha)+g^2(\xi, \alpha)}\leq \frac{1}{\pi} \cdot\frac{1}{g(\xi, \alpha)}.
	\]	
	
\end{proof}

\begin{lemma}\label{Finftyestimate} Let $\xi> c_0\, t_0$. Then
	\[
	|F(\xi, t_0, \alpha)|\leq 2(\gamma+1)^2 (\lambda^2\xi^{2\alpha} +\xi^2).
	\]
	
\end{lemma}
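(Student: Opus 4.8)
The plan is to estimate $|F(\xi,t_0,\alpha)|$ directly from the factored form obtained in the proof of Lemma~\ref{Festimate}, namely
\[
F(\xi,t_0,\alpha)=(\lambda\gamma\xi^\alpha)^2-(\lambda t_0^\alpha-\xi t_0^{\alpha-1})^2,
\]
which, crucially, does not require the hypothesis $\xi\le c_0 t_0$ and hence is valid in the present range $\xi>c_0 t_0$ as well. First I would bound the two terms separately: the term $(\lambda\gamma\xi^\alpha)^2=\lambda^2\gamma^2\xi^{2\alpha}$ is already of the desired shape, so it only remains to control $(\lambda t_0^\alpha-\xi t_0^{\alpha-1})^2$. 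Using the elementary inequality $(a-b)^2\le 2a^2+2b^2$, this is at most $2\lambda^2 t_0^{2\alpha}+2\xi^2 t_0^{2\alpha-2}$.

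Next I would use the standing assumption $t_0\ge 1$ together with the range restriction $\xi>c_0 t_0$. Since $c_0>0$, we have $t_0< \xi/c_0$; but more simply, because $T_0\ge 1$ we may also just note that $t_0\le \xi/c_0$ gives $t_0^{2\alpha}\le (\xi/c_0)^{2\alpha}$ when one wants powers of $\xi$, while $t_0^{2\alpha-2}\le 1$ when $\alpha\le 1$ and $t_0\ge 1$. Thus $2\xi^2 t_0^{2\alpha-2}\le 2\xi^2$, and $2\lambda^2 t_0^{2\alpha}\le 2\lambda^2 (\xi/c_0)^{2\alpha}\le 2\lambda^2\xi^{2\alpha}$ provided $c_0\ge 1$; in the case $c_0<1$ one instead keeps $t_0^{2\alpha}\le t_0^2\le (\xi/c_0)^2$... — here a little care is needed, so I would instead bound uniformly by observing $t_0^\alpha\le \max\{1,t_0\}=t_0\le \xi/c_0$, hence $\lambda^2 t_0^{2\alpha}\le \lambda^2\xi^2/c_0^2$, and then absorb the constant. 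Collecting terms yields $|F|\le \lambda^2\gamma^2\xi^{2\alpha}+2\lambda^2\xi^{2\alpha}\!\cdot\!(\text{const})+2\xi^2$, which one then bounds above by $2(\gamma+1)^2(\lambda^2\xi^{2\alpha}+\xi^2)$ after checking the numerical constants, using $\gamma^2+2\le 2(\gamma+1)^2$ and $2\le 2(\gamma+1)^2$.

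The only mildly delicate point is the treatment of the mixed power $\xi^2 t_0^{2\alpha-2}$ and the term $\lambda^2 t_0^{2\alpha}$ when $\alpha$ is close to $1$, since then $t_0^{2\alpha}$ is comparable to $t_0^2$ which can be much larger than $\xi^{2\alpha}$ if one is careless; the resolution is precisely that $\xi>c_0 t_0$ forces $t_0\lesssim \xi$, so every power of $t_0$ is controlled by the corresponding power of $\xi$ up to the fixed constant $c_0$, and one can then dump everything into the constant $2(\gamma+1)^2$. I expect the main (very modest) obstacle to be bookkeeping of these constants in a way that cleanly produces the stated bound $2(\gamma+1)^2(\lambda^2\xi^{2\alpha}+\xi^2)$ rather than some larger admissible constant; since the lemma only needs \emph{some} bound of this form for the later argument, any reasonable choice suffices.
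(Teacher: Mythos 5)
Your route is essentially the paper's: the paper also estimates $F$ crudely in this range, via $|F(\xi,t_0,\alpha)|\le(\lambda\gamma\xi^\alpha+\lambda t_0^\alpha+\xi t_0^{\alpha-1})^2\le(\gamma+1)^2(\lambda\xi^\alpha+\xi)^2$, which is your term-by-term bound in product rather than sum-of-squares form. In the first case of (\ref{c0}), where $c_0\ge 1$, your argument is complete and clean: $\xi>c_0t_0\ge t_0$ gives $t_0^{2\alpha}\le\xi^{2\alpha}$, $t_0\ge1$ gives $t_0^{2\alpha-2}\le1$, and $\gamma^2+2\le 2(\gamma+1)^2$ yields exactly the stated constant. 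The genuine gap is your fallback for the case $c_0<1$: there you bound $\lambda^2t_0^{2\alpha}\le\lambda^2t_0^2\le\lambda^2\xi^2/c_0^2$ and propose to ``absorb the constant'', but a term of the form $\lambda^2\xi^2$ cannot be absorbed into $C\,(\lambda^2\xi^{2\alpha}+\xi^2)$ for any constant $C$: letting $\lambda$ and $\xi$ both grow, $\lambda^2\xi^2/(\lambda^2\xi^{2\alpha}+\xi^2)\to\infty$. So that branch, as written, does not produce a bound of the required shape at all.

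The repair is the option you mentioned first and then abandoned: keep the power $\alpha$, i.e.\ $t_0^{2\alpha}\le(\xi/c_0)^{2\alpha}=c_0^{-2\alpha}\xi^{2\alpha}$, which gives $|F|\le(\gamma^2+2c_0^{-2\alpha})\lambda^2\xi^{2\alpha}+2\xi^2$. Note that this is of the right form but with a constant $C(\gamma,\alpha,\lambda_1)$ that can exceed $2(\gamma+1)^2$ when $c_0<1$; in fact the literal constant of the statement is then not attainable in general: for $c_0<1$, $c_0t_0<\xi<t_0$, $t_0$ large and $\lambda$ large one has $|F|\approx\lambda^2t_0^{2\alpha}\approx c_0^{-2\alpha}\lambda^2\xi^{2\alpha}$, and $c_0^{-2\alpha}>4(\gamma+1/\lambda_1)^2$ by (\ref{c0}), which can be much larger than $2(\gamma+1)^2$ (e.g.\ $\gamma=\lambda_1=0.1$). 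The paper's own one-line proof has the same defect in this case, since its second inequality tacitly uses $\lambda t_0^\alpha\le\lambda\xi^\alpha$, i.e.\ $t_0\le\xi$, which is only guaranteed when $c_0\ge1$. None of this harms the Main Lemma, where only a bound $C(\gamma,\alpha,\lambda_1)(\lambda^2\xi^{2\alpha}+\xi^2)$ is used in estimating $J_\infty$ and $I_{j,\infty}$; but your case-$c_0<1$ argument should be rewritten with the $c_0^{-2\alpha}\xi^{2\alpha}$ bound rather than the inadmissible $\lambda^2\xi^2$ one, and the conclusion stated with a generic constant.
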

\begin{proof}Since $t_0>1$, we have
	\[
	|F(\xi, t_0, \alpha)|\leq (\lambda\gamma\xi^\alpha+\lambda t_0^\alpha+\xi t_0^{\alpha-1})^2 \leq (\gamma+1)^2 (\lambda\xi^\alpha +\xi)^2.
	\]	
\end{proof}

\section{Proof of the main lemma}

Let us fix $c_0$ as in (\ref{c0}) and write each of the integrals $I_j$ as the sum of two integrals: over the interval $(0, c_0\, t_0)$ (denoted by $I_{j,0}$) and over the set $(c_0\, t_0, \infty)$ (denoted by $I_{j,\infty}$). Symbolically, this can be written as:
\[
I_j=\int\limits_0^{c_0\, t_0} \cdot +\int\limits_{c_0\, t_0}^\infty \cdot = I_{j,0} + I_{j,\infty}, \, j=1,..., 5.
\]
Thus we have 10 integrals. We select the main one for sufficiently large $t_0$ and estimate its absolute value from below (it turns out that it is negative) and all other integrals from above.

Taking into account the additional factor $\ln t_0$, it is natural to assume that integrals $I_{1,0}$ and $I_{3,0}$ are principal. Let us estimate the sum of these integrals. We have
\[
J_0\equiv I_{1,0}+I_{3,0}=t_0^{\alpha-1} \ln t_0\,
\int\limits_0^{c_0\, t_0} e^{-\xi} b_{\alpha,1}(\lambda, t_0, \xi)\frac{F(\xi, t_0,\alpha)}{f^2(\xi, t_0, \alpha)+g^2(\xi, \alpha)} d\xi.
\]
To estimate $F$, we use Lemma \ref{Festimate}, to estimate the denominator from above, we use (\ref{fg}), and to estimate the function $b_{\alpha, 1}$ from below, we use Lemma \ref{b0estimate}. Then we will obtain
\[
J_0\leq -\frac{3 \gamma \sin \alpha \pi}{25 \lambda\pi}\,t_0^{-\alpha-1} \ln t_0\int\limits_0^{c_0\, t_0} e^{-\xi} \xi^\alpha d\xi.
\]
Since under the conditions of the main lemma $t_0\geq 1$, to improve the estimate, the integral can be taken in the interval $(0, c_0)$. Then the final estimate will look like
\begin{equation}\label{J}
	J_0\leq -
	\frac{C_{0,0}}{\lambda}\,t_0^{-\alpha-1} \ln t_0,
\end{equation}
where $C_{0,0}=C_{0,0}(\gamma, \alpha, \lambda_1)>0$ is a constant.

To estimate the integral
\[
I_{2, 0}=t_0^{\alpha-1}\frac{\gamma}{\pi}\,
\int\limits_0^{c_0 t_0} e^{-\xi}\, \frac{\lambda \xi^\alpha [\ln \xi \sin \alpha\pi +\pi \cos \alpha\pi]}{f^2(\xi, t_0, \alpha)+g^2(\xi, \alpha)} d\xi
\]
from above, we use estimate (\ref{fg}):
\begin{equation}\label{I20}
	\big|I_{2, 0}\big|\leq \frac{4 \gamma}{\pi\lambda}\,t_0^{-\alpha-1}
	\int\limits_0^{c_0 t_0} e^{-\xi}\, \xi^\alpha (|\ln \xi| +1) d\xi\leq C_{2,0}\,t_0^{-\alpha-1},
\end{equation}
where $C_{2,0}=C_{2,0}(\gamma,\alpha, \lambda_1)>0$ is a constant.

Consider the integral
\[
I_{4, 0}=-t_0^{\alpha-1} \,
\int\limits_0^{c_0 t_0} e^{-\xi} b_{\alpha,1}(\lambda, t_0, \xi)\frac{2\,f(\xi, t_0,\alpha)\lambda\gamma \xi^\alpha [\ln \xi \cos \alpha\pi -\pi \sin \alpha\pi]}{f^2(\xi, t_0, \alpha)+g^2(\xi, \alpha)} d\xi.
\]
Application of Lemma \ref{b0estimate} and estimates (\ref{festimate}) and (\ref{fg}) imply
\begin{equation}\label{I40}
	\big|I_{4, 0}\big|\leq \frac{C_4}{\lambda}\,t_0^{-2 \alpha-1}
	\int\limits_0^{c_0 t_0} e^{-\xi}\, \xi^{2\alpha} (|\ln \xi| +1) d\xi\leq C_{4,0}\,t_0^{-2\alpha-1},
\end{equation}
where $C_{4,0}=C_{4,0}(\gamma,\alpha, \lambda_1)>0$ is a constant.

For the integral
\[
I_{5,0}=-t_0^{\alpha-1} \,
\int\limits_0^{c_0 t_0} e^{-\xi} b_{\alpha,1}(\lambda, t_0, \xi)\frac{2\, g(\xi, \alpha)\lambda\gamma \xi^\alpha [\ln \xi \sin \alpha\pi +\pi \cos \alpha\pi]}{f^2(\xi, t_0, \alpha)+g^2(\xi, \alpha)} d\xi,
\]
we again apply Lemma \ref{b0estimate} and estimate (\ref{fg}). Then
\begin{equation}\label{I50}
	\big|I_{5, 0}\big|\leq \frac{C_5}{\lambda}\,t_0^{-3 \alpha-1}
	\int\limits_0^{c_0 t_0} e^{-\xi}\, \xi^{3\alpha} (|\ln \xi| +1) d\xi\leq C_{5,0}\,t_0^{-3\alpha-1},
\end{equation}
where $C_{5,0}=C_{5,0}(\gamma,\alpha, \lambda_1)>0$ is a constant.

Now let us estimate all integrals $I_{j, \infty}$ from above. We note right away that in estimating these integrals we use the estimate $f^2+g^2\geq g^2$.

We start with the integral
\[
J_\infty\equiv I_{1,\infty}+I_{3,\infty}=t_0^{\alpha-1} \ln t_0\,
\int\limits_{c_0\, t_0}^\infty e^{-\xi} b_{\alpha,1}(\lambda, t_0, \xi)\frac{F(\xi, t_0,\alpha)}{f^2(\xi, t_0, \alpha)+g^2(\xi, \alpha)} d\xi.
\]
Apply Lemmas  \ref{binftyestimate} and \ref{Finftyestimate} to obtain (see (\ref{Gamma}))
\[
|J_\infty|\leq  \frac{C}{\lambda} t_0^{\alpha-1} \ln t_0\,
\int\limits_{c_0\, t_0}^\infty e^{-\xi} \xi^{-\alpha}d\xi+ \frac{C}{\lambda^3} t_0^{\alpha-1} \ln t_0\,
\int\limits_{c_0\, t_0}^\infty e^{-\xi} \xi^{2-3\alpha}d\xi\leq
\]
\[
\leq \frac{C}{\lambda} t_0^{-1} \ln t_0\, e^{-c_0 t_0}+ \frac{C}{\lambda^3} t_0^{-2\alpha-1} \ln t_0\,
\Gamma(3).
\]
Therefore, for a sufficiently  large $t_0$ we have
\begin{equation}\label{Jinfty}
	J_\infty=o(1)\,	\frac{1}{\lambda} t_0^{-\alpha-1} \ln t_0.
\end{equation}

Simple calculations show
\[
|I_{2, \infty}|\leq \frac{C}{\lambda} t_0^{\alpha-1}\,
\int\limits_{c_0 t_0}^\infty e^{-\xi}\, \xi^{-\alpha} (|\ln \xi|+1) d\xi\leq \frac{C}{\lambda} t_0^{-1}\,
\int\limits_{c_0 t_0}^\infty e^{-\xi}\, \xi d\xi.
\]
If we use Lemma \ref{gamma} to estimate the last integral, then for a sufficiently large $t_0$ we obtain
\begin{equation}\label{I2infty}
	I_{2, \infty}=O(1)\,	\frac{1}{\lambda} t_0^{-\alpha-1}.
\end{equation}

Since $|f(\xi, t_o, \alpha)|\leq \xi+\lambda (\gamma +1) \xi^\alpha$, then Lemma \ref{binftyestimate} implies
\[
|I_{4, \infty}|\leq \frac{C}{\lambda} t_0^{\alpha-1}\,
\int\limits_{c_0 t_0}^\infty e^{-\xi}\, \xi^{-\alpha} (|\ln \xi|+1)(\lambda^{-1} \xi^{1-\alpha}+1) d\xi\leq \frac{C}{\lambda} t_0^{-1}\,
\int\limits_{c_0 t_0}^\infty e^{-\xi}\, \xi d\xi.
\]
Again, by virtue of Lemma \ref{gamma}, for sufficiently large $t_0$ we have
\begin{equation}\label{I4infty}
	I_{4, \infty}=O(1)\,	\frac{1}{\lambda} t_0^{-\alpha-1}.
\end{equation}

Finally, consider the integral
\[
I_{5,\infty}=-t_0^{\alpha-1} \,
\int\limits_{c_0 t_0}^\infty e^{-\xi} b_{\alpha,1}(\lambda, t_0, \xi)\frac{2\, g(\xi, \alpha)\lambda\gamma \xi^\alpha [\ln \xi \sin \alpha\pi +\pi \cos \alpha\pi]}{f^2(\xi, t_0, \alpha)+g^2(\xi, \alpha)} d\xi.
\]
Apply Lemma \ref{binftyestimate} to obtain
\[
|I_{5,\infty}|\leq \frac{C}{\lambda}t_0^{\alpha-1} \,
\int\limits_{c_0 t_0}^\infty e^{-\xi} \xi^{-\alpha} (|\ln \xi|+1) d\xi.
\]
Therefore, for sufficiently large $t_0$ one has
\begin{equation}\label{I4infty}
	I_{5, \infty}=O(1)\,	\frac{1}{\lambda} t_0^{-\alpha-1}.
\end{equation}

By virtue of definition (\ref{derivative})), summing up the estimates of the integrals $J_0,$ $J_\infty$ $I_{j,0}$ and $I_{j, \infty}$, we obtain the assertion of the main lemma.

\section{Theorem proofs}
This Rayleigh-Stokes problem is solved, for example, in work \cite{Bazh} (see also \cite{AshurovVaisova}): for any $\varphi \in H$ the problem has a unique solution
\[
u(t)=\sum\limits_{k=1}^\infty B_\alpha (\lambda_k, t) \varphi_k v_k.
\]
Therefore,
\[
U(t, \alpha)= ||u(t)||^2=\sum\limits_{k=1}^\infty B^2_\alpha (\lambda_k, t) |\varphi_k|^2.
\]

Note, according to Lemma \ref{Bazh}, $B_\alpha (\lambda_k, t)>0$ for all $t>0$ and $k\geq 1$. The Main Lemma proves that the derivative $\partial_\alpha B_\alpha (\lambda_k, t_0)$ is negative for a sufficiently large $t_0$ and for all $k\geq 1$, i.e., it is proved that for all $k\geq 1$ and starting from $t_0\geq T_0$ function $B_\alpha(\lambda_k, t_0)$  decreases with respect to $\alpha$. Therefore, Theorem \ref{norm} is a consequence of the Main Lemma.

Let us proceed to the proof of Theorem \ref{main}. By the conditions of the theorem (see (\ref{U_con}), let the given number $d_0$ be such that
$$
\inf_{\alpha\in (0,1)}U(t_0, \alpha)\leq d_0\leq 	\sup_{\alpha\in (0,1)}U(t_0, \alpha), \quad t_0\geq T_0.
$$
Considering that, according to Theorem \ref{norm}, the function $U(t_0, \alpha)$ is strictly monotonic, this immediately implies the existence of a single number $\alpha$ satisfying the additional condition (\ref{ad_con}). Obviously, if the opposite inequalities hold, then the number $d_0$ does not enter the range of the function $U(t_0, \alpha)$, and as a result, the number $\alpha$ satisfying the additional condition (\ref{ad_con}) does not exist.

After the parameter $\alpha$ is found, the solution $u(t)$ that satisfies all the requirements of Definition \ref{def} is found in the same way as in work \cite{AshurovVaisova}.

We turn to the proof of the uniqueness of the solution of the inverse problem (\ref{probAbstract})-(\ref{ad_con}). Let there be two pairs of solutions $\{u_1, \alpha_1\}$ and $\{u_2 , \alpha_2\}$ such that $0<\alpha_j < 1$ and
\begin{equation}\label{eq1}
	\partial_t u_j(t)  + (1+\gamma\, \partial_t^{\alpha_j})A u_j(t) = 0,\quad 0< t \leq T,
\end{equation}
\begin{equation}\label{in1}
	u_j(0)= \varphi,
\end{equation}
where $j=1, 2$. For $t_0\geq T_0$ additional condition (\ref{ad_con}) implies
\begin{equation}\label{ad_con2}
	||u_1(t_0)||^2=||u_2(t_0)||^2=d_0.
\end{equation}

Consider the following functions
$$
T^j_k(t)=(u_j(t), v_k) \quad j=1,2; \quad k=1,2, \cdot\cdot\cdot
$$
Then equations (\ref{eq1}) and (\ref{in1}) imply
$$
\partial_t T^j_k(t)  + \lambda_k (1+\gamma\, \partial_t^{\alpha_j}) T^j_k(t) = 0 \quad  w^j_k(0) =
\varphi_k.
$$
Solutions to these Cauchy-type problems can be represented as (see (\ref{BCauchySolutionIN})
\[
T_k^j(t)=\varphi_k B_{\alpha_j}(\lambda_k, t).
\]
Note that, by virtue of Lemma \ref{Bazh}, the functions $ B_{\alpha_j}(\lambda_k, t)$ are positive for all $k$ and $j$, $t>0$. Assume, for example, that $\alpha_1<\alpha_2$ and $t_0\geq T_0$. Then the Main Lemma implies
\[
|\varphi_k|^2 B^2_{\alpha_1}(\lambda_k, t_0)>|\varphi_k|^2 B^2_{\alpha_2}(\lambda_k, t_0),
\]
for all $k\geq 1$. Then 	$||u_1(t_0)||^2>||u_2(t_0)||^2$ and this contradicts equality (\ref{ad_con2}). Therefore, $\alpha_1=\alpha_2$ and hence $T_k^1(t)\equiv T_k^2(t)$. Hence
$$
(u_1(t)-u_2(t), v_k)=0
$$
for all $k$. Finally, from completeness of the set of eigenfunctions $\{v_k\}$ in $H$, we have $u_1(t)\equiv u_2(t)$. Hence Theorem
\ref{main} is completely proved.

\section{Examples of the $A$ operator and various additional conditions}

1. The use of specific self-adjoint operators as $A$ allows us to consider various mathematical models. As an example, we can take the operators of mathematical physics considered in section 6 of the article by M. Ruzhansky et al. \cite{Ruz}, including the classical Sturm-Liouville problem, differential models with involution, fractional Sturm-Liouville operators, harmonic and anharmonic oscillators, Hamiltonians Landau, fractional Laplacians, harmonic and anharmonic operators on the Heisenberg group. As a result, we obtain various Rayleigh-Stokes equations for which Theorems \ref{norm} and \ref{main} of this work will be valid.

It should be noted that the authors of \cite{Ruz} considered inverse problems of recovering the right-hand side of the subdiffusion equation with Caputo derivatives, taking various positive operators with discrete spectrum as the elliptic part.

2. If we take $\mathbb{R}^N$ as the Hilbert space $H$ and consider the $N$-dimensional symmetric quadratic matrix $A=\{a_{i,j}\}$  with constant elements $a_{ i,j}$ as the operator $A$ then the problem (\ref{probAbstract}), (\ref{ad_con}) coincides with the inverse problem for the linear system of Rayleigh-Stokes ordinary differential equations.

3. Now let us discuss what other additional conditions can be considered instead of (\ref{ad_con}). Let $\Phi: R_+\rightarrow R$ be a continuous function
such that, the domain of definition $D(\Phi(A))$ of the operator
\[
\Phi(A)u(t)=\sum\limits_{k=1}^\infty \Phi(\lambda_k)\, (u(t),v_k)
v_k,
\]
is a subset of $D(A)$. Then as an additional condition we may take the
following equation
\begin{equation}\label{ex}
	U(t_0, \alpha) \equiv ||\Phi(A) u(t_0)||^2=d_0,
\end{equation}
where $t_0\geq T_0$ is a fixed time instant. It is not hard to verify,
that for this additional information both Theorems \ref{norm} and \ref{main} hold true.

Let us consider some particular cases of the function $\Phi(\lambda)$. If $\Phi(\lambda) \equiv 1$ then the corresponding value
\[
U(t_0, \alpha)= ||u(t_0)||^2
\]
coincides with (\ref{ad_con}). In case when $\Phi(\lambda) =  \lambda$, we have $U(t_0, \alpha)=
||Au(t_0)||^2 $.

\section{Conclusion}

The well-known Rayleigh-Stokes problem (\ref{probIN}) involves the fractional derivative $\partial_t^\alpha$, which describes the behavior of a viscoelastic flow. However, this parameter is often unknown and difficult to measure directly. Therefore, it is undoubtedly topical to study the inverse problem to determine this physical quantity from some indirectly observed information about the solutions. The present work is devoted to the study of this new inverse problem for the Rayleigh-Stokes equation. In this paper, along with other results, we prove that the additional condition $||u(x, t_0)||_{L_2(\Omega)}=d_0$ for sufficiently large $t_0$ uniquely determines the parameter $\alpha$.

Another very important problem studied in this work is to find out the dependence of the behavior of the solution of the initial-boundary value problem on the order of the fractional derivative. In this work, an interesting fact was discovered: if we consider the norm of the solution $||u(x, t_0)||^2_{L_2(\Omega)}$ as a function of the parameter $\alpha$, then this is a decreasing function. In other words, the norm acquires its maximum value when the order of the fractional derivative is close to zero, and its minimum value - when this parameter is close to one.

\end{document}